\newtheorem{thm}{Theorem}[section]
\newtheorem{cor}[thm]{Corollary}
\newtheorem{lem}[thm]{Lemma}
\newtheorem{conj}[thm]{Conjecture}
\newtheorem{obs}[thm]{Observation}
\newtheorem{example}[thm]{Example}
\renewcommand{\L}{\mathcal{L}} 
\newcommand{\K}{\mathcal{K}}
\renewcommand{\v}{\mathbf{v}}
\renewcommand{\u}{\mathbf{u}}
\newcommand{\w}{\mathbf{w}}
\newcommand{\x}{\mathbf{x}}
\newcommand{\y}{\mathbf{y}}
\newcommand{\dlambda}[1]{\frac{d \lambda}{d #1}} 
\newcommand{\dk}[1]{\frac{d \K}{d #1}} 
\newcommand{\spec}{\text{Spec}}
\providecommand{\keywords}[1]
{
  \small	
  \textbf{\textit{Keywords--}} #1
}
\providecommand{\AMS}[1]
{
  \small	
  \textbf{\textit{AMS:}} #1
}
\tikzstyle{vertex}=[draw,thick,fill=white,circle,inner sep=2pt]
\tikzstyle{full}=[draw,thick,fill=black,circle,inner sep=2pt]
\tikzstyle{empty}=[draw,color=black!40!white,thick,fill=white,circle,inner sep=2pt]
\begin{document}
\bibliographystyle{plain}

\setcounter{page}{1}

\thispagestyle{empty}

\title{\Title\thanks{Received
 by the editors on \DoS.
 Accepted for publication on \DoA. 
 Handling Editor: \HE. Corresponding Author: \CA}}

\title{On the Edge Derivative of the Normalized Laplacian with Applications to Kemeny's Constant}

\author{Connor Albright\thanks{UC Riverside, Riverside, California, USA (cpcalbright2010@gmail.com)} 
\and 
Kimberly P. Hadaway
\thanks{Iowa State University, Ames, Iowa, USA (\{kph3, joeljeff\}@iastate.edu)} 
\and 
Ari Holcombe Pomerance\thanks{Macalester College, Saint Paul, Minnesota, USA (aholcomb@macalester.edu)}
\and
Joel Jeffries\footnotemark[2] 
\and
Kate J. Lorenzen\thanks{Linfield University, McMinniville, Oregon, USA (klorenzen@linfield.edu)} 
\and
Abigail K. Nix\thanks{Middlebury College, Middlebury, Vermont, USA (anix@middlebury.edu)} }

\maketitle

\begin{abstract} 
In a connected graph, Kemeny's constant gives the expected time of a random walk from an arbitrary vertex $x$ to reach a randomly-chosen vertex $y$. 
Because of this, Kemeny's constant can be interpreted as a measure of how well a graph is connected. 
It is generally unknown how the addition or removal of edges affects Kemeny's constant.
Inspired by the edge derivative of the normalized Laplacian, we derive the edge derivative of Kemeny's constant for several graph families. 
In addition, we find sharp bounds for the edge derivative of an eigenvalue of the normalized Laplacian and bounds for the edge derivative of Kemeny's constant. 
\end{abstract}

\keywords{Kemeny's constant, normalized Laplacian, edge derivative of eigenvalues.}

\AMS{05C50, 05C81.}

\section{Introduction}
A graph $G$ has a set of vertices $V(G)$ and a set of edges $E(G)$. 
An edge connecting vertices $x$ and $y$ is written $e= \{x,y\}$, and we say that $x$ and $y$ are \emph{adjacent} if there exists an edge between them. 
We say that a graph is \emph{simple} if it has no loops (an edge going from one vertex back to itself) and no more than one edge between any two vertices.
For a vertex $x$, the \emph{neighbors} of $x$ are the vertices adjacent to $x$, and the \emph{degree} of $x$, denoted $d_x$, is the number of neighbors of $x$.
A graph is \emph{connected} if for any pair of vertices $x,y$, there exists a path, or a sequence of edges, from $x$ to $y$.

Consider a random walk on the vertices and edges of a simple connected graph $G$. 
We can think of this as someone walking along the edges of the graph. 
Our walker starts by occupying vertex $x$, and in the next step, the walker moves to one of the neighbors of $x$ at random with uniform probability of $\frac{1}{d_x}$. 
Note that this is a finite Markov chain, whose probability transition matrix is defined as $T:=D^{-1}A$. Here, $D$ is the diagonal matrix containing the vertex degrees, and $A$ is the adjacency matrix of $G$. 
Since $G$ is connected, $D$ is invertible. 

When analyzing Markov chains, or random walks on graphs, we can look at the long-term or the short-term behavior. 
The long-term behavior is found by taking repeated powers of $T$. 
For a general Markov chain which is irreducible and primitive, each row converges to the \emph{stationary vector} $\w$ of the chain. 
For more details, see \cite{GS97}. 
This vector $\w$ is also a left eigenvector of $T$ with the corresponding eigenvalue $1$. 
The stationary vector can be interpreted as where a random walker is likely to be during a long random walk, and is independent of the starting vertex. 
The short-term behavior is described by the \emph{mean-first passage times}. 
These indicate the expected time (or number of steps), starting at some vertex $x$, to reach some other vertex $y$, denoted $m_{x,y}$.

Kemeny's constant, denoted $\mathcal{K}(G)$, combines the mean-first passage times and the stationary vector of a graph $G$. 
For a vertex $x$, the weighted average of the mean-first passage times from $x$ to each other vertex in the graph, where the weights are the corresponding entries of the stationary vector results in the following parameter,
\begin{align*}
    \mathcal{K}(G)= \sum_{y=1}^n m_{x,y}\w_y.
\end{align*}
Surprisingly, $\mathcal{K}$ is not dependent on the starting vertex $x$, hence the name Kemeny's constant. 

Kemeny's constant has many useful interpretations, including the spread of infectious diseases (how quickly a disease will reach epidemic levels), molecular conformation dynamics (presence or absence of metastable sets), and urban road networks (how well connected a network is). 
In general, a lower Kemeny's constant means that a graph is more connected, and a higher Kemeny's constant means that a graph is less connected. 

\begin{figure}[ht]
    \centering
   \begin{tikzpicture}
\foreach \x in {1,...,6}{
    \node[vertex] (c\x) at (\x,0) {};
    }
\node[vertex] (a1) at ({0 + cos(60)},{sin(60)}) {};
\node[vertex] (a6) at ({7 + cos(60)},{sin(60)}) {};

\node[vertex] (b1) at ({0 +cos(-60)},{sin(-60)}) {};
\node[vertex] (b6) at ({7 +cos(-60)},{sin(-60)}) {};

\draw[thick] (a1)--(b1)--(c1)--(a1) (a6)--(b6)--(c6)--(a6);
\draw[thick]  (c1)--(c2)--(c3)--(c4)--(c5)--(c6);

\node[vertex] (d1) at ({0 + cos(120)},{sin(120)}) {};
\node[vertex] (d2) at ({0 + cos(180)},{sin(180)}) {};
\node[vertex] (d3) at ({0 + cos(-120)},{sin(-120)}) {};
\foreach \x in {1,...,3}{
    \draw[thick] (d\x)--(a1) (d\x)--(b1) (d\x)--(c1);
    \foreach \y in {1,...,\x} {
        \ifnum\x=\y\relax\else 
            \draw[thick] (d\x)--(d\y);
        \fi
    }
}
\node[vertex] (e1) at ({7 + cos(120)},{sin(120)}) {};
\node[vertex] (e2) at ({7 + cos(0)},{sin(0)}) {};
\node[vertex] (e3) at ({7 + cos(-120)},{sin(-120)}) {};
\foreach \x in {1,...,3}{
    \draw[thick] (e\x)--(a6) (e\x)--(b6) (e\x)--(c6);
    \foreach \y in {1,...,\x} {
        \ifnum\x=\y\relax\else
            \draw[thick] (e\x)--(e\y);
        \fi
    }
}
\end{tikzpicture}
    \caption{A barbell graph consists of two cliques connected by a path. Removing an edge within the clique will decrease Kemeny's constant. This contradicts our intuition that removing edges will always make a graph less connected. }
    \label{fig:barbell}
\end{figure}
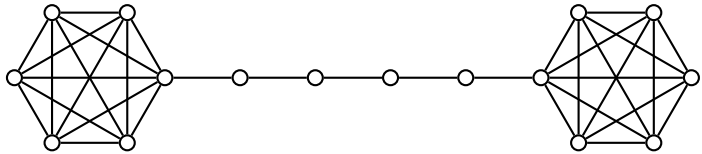

For these and other applications, a big question is: \textit{How do changes in the network lead to changes in Kemeny's constant?}
Given the relation to connectivity, one might assume Kemeny's constant must decrease as edges are added to a graph. However, contrary to this intuition, there are some graphs where the removal of an edge decreases Kemeny's constant, and some graphs where the addition of an edge increases Kemeny's constant (see Figure \ref{fig:barbell}). This contradictory phenomenon is well documented as Braess' paradox where adding roads to a road network can slow down traffic (see \cite{B68}).  
An open problem is how the removal or addition of edges affects Kemeny's constant. 
Breen and Kirkland in \cite{BK19} looked at how small perturbations in the transitional probabilities related to changes in Kemeny's constant. 
They were able to find a condition number which could serve as a confidence interval if the probabilities were calculated with raw data. 
They used the fundamental matrix of the perturbed transition matrix to find the size of the change. 
We also look at small changes to probabilities yet7 remain in the context of simple connected graphs. 

In particular, we use the connection between Kemeny's constant and the spectrum of the 
normalized Laplacian matrix to compute how Kemeny's constant changes as small changes are made to a graph. 
The \emph{normalized Laplacian matrix} of a graph is given by $\L = I - D^{-\frac{1}{2}}AD^{-\frac{1}{2}}$ where $A$ is the adjacency matrix and $D$ is the degree diagonal matrix.
Note that the probability transition matrix $T$ is similar to $D^{-\frac{1}{2}}AD^{-\frac{1}{2}}$. Therefore, building off the connection Levene and Loizou made between Kemeny's constant and the spectrum of $T$ in \cite{LL02},  Kemeny's constant can be computed as
\begin{align*}
    \mathcal{K}(G) &= \sum_{\lambda_{T} \ne 1} \frac{1}{1-\lambda_{T}}\\
    &=\sum_{\lambda_{\L} \ne 0} \frac{1}{\lambda_{\L}},
\end{align*}
where each $\lambda_\L$ is an eigenvalue of $\L$.

Calculating Kemeny's constant using the spectra of graph matrices allows us to use existing tools of spectral graph theory.
Recently, there have been developments in the edge derivative of the eigenvalues of graph matrices by Aksoy, Purvine, and Young in \cite{APY21}. 
They derived the derivative of eigenvalues with respect to a vertex $x$, where all edges containing $x$ had a parameter $t$ added to their current weight. (Here, we assume all unweighted edges have weight $1$ and all non-edges have weight $0$.) 
This idea can be extended to any set of edges (or non-edges) of the graph.
 
This edge derivative can be interpreted as the effect of a slight change to an edge (or non-edge) weight on the eigenvalues of a graph matrix. 
Since Kemeny's constant can be calculated by the eigenvalues of the normalized Laplacian, it follows that the edge derivative can be extended to Kemeny's constant. 
In this paper, we give results on the edge derivative of eigenvalues of the normalized Laplacian and the edge derivative of Kemeny's constant. 
We explicitly find these values for some families of graphs and establish bounds. 

\section{Edge Derivative of Eigenvalues}\label{sec:eigenvalue}

Before we can discuss the derivative of an eigenvalue, we first establish the parameterized normalized Laplacian is, in fact, differentiable. Let us first establish some facts about the spectrum of a matrix (see \cite{lancaster64}). 

\begin{thm}
    Let $M_0$ be a real-symmtric $n \times n$ matrix with eigenpair $(\lambda_0, \v_0)$ such that $\v_0$ is a unit vector. If $\lambda_0$ is a simple eigenvalue, then there exists a neighborhood $N(M_0)$ and functions $\lambda:N(M_0)\to \mathbb{R}$ and $\v:N(M_0)\to \mathbb{R}^n$ such that the eigenpair of $M \in N(M_0)$ is $(\lambda(M), \v(M))$ such that $\v(M)$ is a unit vector. 

    Furthermore, $\lambda$ and $\v$ are infinitely differentiable on $N(M_0)$.
\end{thm}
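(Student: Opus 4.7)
The plan is to apply the implicit function theorem to a map that characterizes eigenpairs together with the unit-length normalization. Define
\[
F:\mathbb{R}^{n\times n}\times \mathbb{R}^n \times \mathbb{R} \longrightarrow \mathbb{R}^n \times \mathbb{R}, \qquad F(M,\v,\lambda) = \bigl(M\v - \lambda \v,\; \v^T\v - 1\bigr).
\]
By hypothesis, $F(M_0,\v_0,\lambda_0) = 0$. The map $F$ is a polynomial in its entries, so it is $C^\infty$, and the implicit function theorem will produce smooth local functions $\v(M)$, $\lambda(M)$ provided the partial Jacobian $J := D_{(\v,\lambda)}F\bigl(M_0,\v_0,\lambda_0\bigr)$ is invertible. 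In block form,
\[
J \;=\; \begin{pmatrix} M_0 - \lambda_0 I & -\v_0 \\ 2\v_0^T & 0 \end{pmatrix}.
\]

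The key step is to verify that $J$ is nonsingular, and this is where simplicity of $\lambda_0$ and symmetry of $M_0$ enter. Suppose $J\begin{pmatrix}\x\\ \mu\end{pmatrix}=0$, i.e.\ $(M_0-\lambda_0 I)\x = \mu\v_0$ and $\v_0^T\x = 0$. Left-multiplying the first equation by $\v_0^T$ and using $\v_0^T(M_0-\lambda_0 I) = 0$ (because $M_0$ is symmetric with $M_0\v_0 = \lambda_0\v_0$) gives $\mu\|\v_0\|^2 = 0$, hence $\mu = 0$. Then $(M_0-\lambda_0 I)\x = 0$, and since $\lambda_0$ is simple, $\ker(M_0-\lambda_0 I) = \operatorname{span}\{\v_0\}$, so $\x = c\v_0$ for some scalar $c$. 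The orthogonality condition $\v_0^T\x = 0$ then forces $c = 0$, proving $J$ is invertible.

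With $J$ invertible, the implicit function theorem supplies an open neighborhood $N(M_0)\subseteq \mathbb{R}^{n\times n}$ and $C^\infty$ functions $\v:N(M_0)\to\mathbb{R}^n$ and $\lambda:N(M_0)\to\mathbb{R}$ satisfying $F(M,\v(M),\lambda(M)) = 0$ for all $M\in N(M_0)$, with $\v(M_0)=\v_0$ and $\lambda(M_0)=\lambda_0$. The first component of $F$ then encodes the eigen-equation $M\v(M) = \lambda(M)\v(M)$, while the second component encodes the unit-length condition $\|\v(M)\|=1$. Shrinking $N(M_0)$ if necessary (using continuity of the spectrum, e.g.\ via the fact that simple eigenvalues remain simple under small perturbation), we can guarantee that $\lambda(M)$ is the unique eigenvalue of $M$ near $\lambda_0$ and remains simple, so the eigenpair is well defined on $N(M_0)$.

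The main obstacle, as sketched above, is showing invertibility of $J$; the rest is routine application of the implicit function theorem. One subtlety worth flagging in the write-up is that the unit-norm normalization still leaves a sign ambiguity $\v \mapsto -\v$, which is why the theorem is really asserting existence of a \emph{continuous} (and then automatically smooth) choice on a neighborhood, selected by the initial condition $\v(M_0)=\v_0$.
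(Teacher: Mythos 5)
Your proof is correct. Note that the paper itself does not prove this statement --- it is quoted as a known fact with a citation to Lancaster's 1964 paper on eigenvalues of matrices dependent on a parameter --- so any complete argument is necessarily a different route from what appears in the paper. Your implicit-function-theorem argument is the standard modern one and is sound: the map $F(M,\v,\lambda)=(M\v-\lambda\v,\ \v^T\v-1)$ is polynomial, and the block Jacobian in $(\v,\lambda)$ is invertible at $(M_0,\v_0,\lambda_0)$ precisely because symmetry gives $\v_0^T(M_0-\lambda_0 I)=0$ (forcing $\mu=0$) while simplicity gives $\ker(M_0-\lambda_0 I)=\operatorname{span}\{\v_0\}$ (forcing $\x=0$ after the orthogonality constraint); this cleanly isolates where each hypothesis is used. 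Lancaster's treatment proceeds instead through perturbation expansions of the characteristic polynomial and resolvent along a one-parameter family; your route is more self-contained and delivers $C^\infty$ (indeed real-analytic) dependence on all of a matrix neighborhood at once, which is more than the paper needs, since it only ever differentiates along the curve $M(t)=\L(t)$. Your closing remarks --- that the unit-norm condition leaves a sign ambiguity resolved by the initial condition $\v(M_0)=\v_0$, and that the neighborhood should be shrunk so that $\lambda(M)$ remains the unique simple eigenvalue near $\lambda_0$ --- address the only genuine subtleties, so nothing is missing.
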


Aksoy et al. in \cite{APY21} represented $M(t)=A+tB$ where $A$ is the adjacency matrix of a graph $G$ and $B$ is a symmetric matrix representing a collection of edges. In their work, they looked at all edges incident to a particular vertex (so the derivative is in the direction of a vertex). 

We will not restrict B and instead have $B$ be a symmetric matrix representing any collection of edges denoted $E_C$. An example of this parameterization is shown in Figure \ref{fig:pawandL}. 
Colloquially, we will refer to this as the \textit{edge derivative} of a graph.

 \begin{figure}[ht]
\centering
\begin{subfigure}{.35\textwidth}
    \centering
    \begin{tikzpicture}[scale=.9]
     \node[vertex, label=$u_3$] (v1) at ({0 + cos(0)},{sin(0)}) {};
     \node[vertex, label=$u_1$] (v2) at ({0 + cos(120)},{sin(120)}) {};
     \node[vertex, label=below:$u_2$] (v3) at ({0 + cos(240)},{sin(240)}) {};
     \node[vertex, label=$u_4$] (v4) at ({0 + cos(0) + abs(sin(120) - sin(240))}, 0) {};
     
     \draw[thick] (v1)--(v2) node[midway, above] {$1$};
     \draw[thick] (v2)--(v3) node[midway, left] {$1$};
     \draw[thick] (v3)--(v1) node[midway, below] {$1$};
     \draw[ultra thick] (v1)--(v4) node[midway, above] {$1+t$};
    \end{tikzpicture}
\end{subfigure}
\begin{subfigure}{.6\textwidth}
\centering
    \begin{align*}
        \L(t) = \left[ \begin{array}{cccc}
          1  & \frac{-1}{2} &\frac{-1}{\sqrt{2(3+t)}} & 0 \\
          \frac{-1}{2}  & 1 &\frac{-1}{\sqrt{2(3+t)}} & 0\\
          \frac{-1}{\sqrt{2(3+t)}} & \frac{-1}{\sqrt{2(3+t)}} & 1 & \frac{-1-t}{\sqrt{(1+t)(3+t)}}\\
          0 & 0 & \frac{-1-t}{\sqrt{(1+t)(3+t)}} & 1
        \end{array}\right]
    \end{align*}
\end{subfigure}
\caption{The parameterized paw graph with $E_C = \{\{u_3,u_4\}\}$ (i.e. one changing edge), and the associated normalized Laplacian.}
\label{fig:pawandL}
\end{figure}
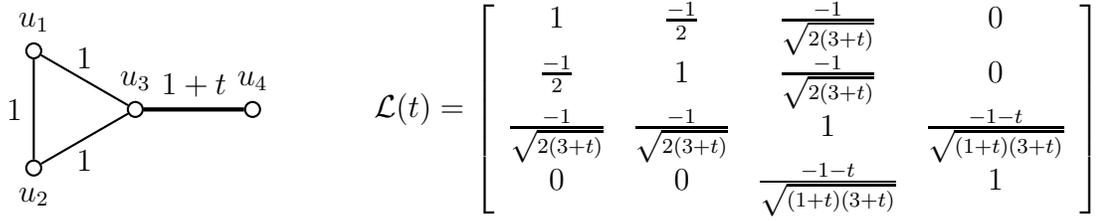

Precisely, the derivative of  a parameterized real-valued symmetric matrix $M(t)$ with simple eigenpair  $(\lambda(t), \mathbf{v}(t))$ where $\mathbf{v}(t)$ is a unit vector is
\begin{align}
    \dlambda{t}(t) = \mathbf{v}^T(t) \frac{dM}{dt}(t) \mathbf{v}(t),
\end{align}\label{general_dlambda}
where  $\frac{dM}{dt}$ is an entry-wise derivative of $M(t)$.

Before moving forward, we note that so far we have discussed that real-symmetric parameterized matrices are differentiable only for simple eigenvalues. In \cite{lancaster64}, it is established that if an eigenvalue is non-degenerate (has a full set of corresponding eigenvectors), then a differentiable neighborhood can still be found. Since real-symmetric matrices are orthogonally diagonalizable, it follows that all eigenvalues we are concerned with are differentiable.

Aksoy et al. in \cite{APY21} found the derivative of eigenvalues of the adjacency, combinatorial Laplacian, and normalized Laplacian matrices. 
We focus on the normalized Laplacian. Here, we use $\v_x$ to mean the $x$th component of $\v$.

\begin{lem}[Aksoy et al. \cite{APY21}]
Let $E_C$ denote a set of edges of graph $G$. 
Let $\lambda$ be a simple eigenvalue of $\L$ of $G$. Then,
\begin{align}\label{the-other-formula}
    \dlambda{E_C} &=(1-\lambda)\sum_{\{x,y\}\in E_C} \left(\frac{\v_{x}^2}{d_x}+ \frac{\v_{y}^2}{d_y} \right)-2\sum_{\{x, y\}\in E_C} \frac{\v_{x} \v_{y}}{\sqrt{d_xd_y}}.
\end{align}
\end{lem}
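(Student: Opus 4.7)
My plan is to apply the general formula $\dlambda{t} = \v^T \frac{dM}{dt} \v$ for a simple eigenvalue to $M(t) = \L(t)$, where $\L(t) = I - D(t)^{-1/2} A(t) D(t)^{-1/2}$ is the normalized Laplacian of the parameterized adjacency matrix $A(t) = A + tB$ with $B$ the symmetric $\{0,1\}$-matrix supported on $E_C$. Setting $c_x := |\{y : \{x,y\} \in E_C\}|$, the parameterized degree is $d_x(t) = d_x + t c_x$. The task then reduces to computing $\frac{d\L}{dt}(0)$ entry-by-entry and contracting with $\v$ on both sides.

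The diagonal of $\L(t)$ is identically $1$, so the diagonal derivative vanishes. For $x \neq y$, the quotient rule applied to $\L(t)_{xy} = -A(t)_{xy}/\sqrt{d_x(t) d_y(t)}$ produces two pieces at $t = 0$: a numerator contribution of $-1/\sqrt{d_x d_y}$, present exactly when $\{x,y\} \in E_C$, and a denominator contribution of $(c_x d_y + c_y d_x)/(2 (d_x d_y)^{3/2})$, present exactly when $\{x,y\} \in E$. Forming $\v^T \L'(0) \v = \sum_{x,y} \L'_{xy}(0)\, \v_x \v_y$ and using symmetry to pair $(x,y)$ with $(y,x)$, the numerator piece immediately yields $-2 \sum_{\{x,y\} \in E_C} \frac{\v_x \v_y}{\sqrt{d_x d_y}}$, which is the second term of the claimed formula.

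The denominator piece gives the ordered-pair sum $\sum_x \sum_{y \sim x} \frac{c_x \v_x \v_y}{d_x^{3/2} d_y^{1/2}}$, which ranges over all of $E$, not just $E_C$. To collapse it back to $E_C$, I would factor as $\sum_x \frac{c_x \v_x}{d_x^{3/2}} \sum_{y \sim x} \frac{\v_y}{\sqrt{d_y}}$ and invoke $\L \v = \lambda \v$. Reading off the $x$-coordinate of this eigenvalue equation gives $\sum_{y \sim x} \frac{\v_y}{\sqrt{d_y}} = (1-\lambda)\sqrt{d_x}\, \v_x$, which collapses the inner sum and leaves $(1-\lambda)\sum_x \frac{c_x \v_x^2}{d_x}$. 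A final re-indexing from vertices to perturbed edges, using that each $\{x,y\} \in E_C$ contributes one unit to both $c_x$ and $c_y$, rewrites this as $(1-\lambda)\sum_{\{x,y\} \in E_C}\left(\frac{\v_x^2}{d_x} + \frac{\v_y^2}{d_y}\right)$, matching the first term of the claimed formula.

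The main obstacle is conceptual rather than computational: the denominator piece superficially involves every edge of $G$, yet the target formula is supported only on $E_C$. The key observation is that once this piece is reorganized so that the $c_x$-factor is attached to a fixed endpoint, the remaining inner neighbor-sum is precisely the normalized Laplacian eigenvalue relation, which simultaneously supplies the missing $(1-\lambda)$ factor and reduces the support back to $E_C$. Everything else is quotient-rule bookkeeping and the symmetry $(x,y) \leftrightarrow (y,x)$.
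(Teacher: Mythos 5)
Your derivation is correct: the quotient-rule split of $\frac{d\L_{xy}}{dt}$ into a numerator piece supported on $E_C$ and a degree piece supported on $E$, followed by symmetrizing over ordered pairs and using $\sum_{y\sim x}\v_y/\sqrt{d_y}=(1-\lambda)\sqrt{d_x}\,\v_x$ to collapse the degree piece back onto $E_C$, checks out line by line. The paper itself gives no proof of this lemma---it is quoted directly from Aksoy, Purvine, and Young---but your argument is essentially the standard derivation carried out in that reference, so there is nothing to flag.
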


Since the derivative is dependent on the choice of the eigenvector, for non-simple eigenvalues this is not well-defined. Instead, we can take the derivative over the entire eigenspace, and thus, as show in \cite{APY21} (Lemma 3), the edge derivative will be independent from the particular decomposition of the eigenspace. 

\begin{lem}[Aksoy et al. \cite{APY21}] \label{deriv definition}
Let $E_C$ denote a set of edges of $G$. 
Let $\lambda$ be an eigenvalue of multiplicity $k$ for $\L$ of $G$, and let $V = \{\v_1, \v_2, ... \v_k\}$ be an orthonormal basis for the eigenvectors associated with $\lambda$. Then,
\begin{align}\label{L(G)}
    \dlambda{E_C} &= \frac1k \sum_{i=1}^k\left[
    (1-\lambda)\sum_{\{x,y\}\in E_C} \left(\frac{\v_{i,x}^2}{d_x}+ \frac{\v_{i,y}^2}{d_y} \right)-2\sum_{\{x, y\}\in E_C} \frac{\v_{i,x} \v_{i,y}}{\sqrt{d_xd_y}}
    \right].
\end{align}
\end{lem}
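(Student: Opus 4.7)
The plan is to appeal to the degenerate case of the perturbation theory already invoked for simple eigenvalues. Since $\L$ is real-symmetric, the eigenspace $E_\lambda = \ker(\L - \lambda I)$ has dimension $k$, and any orthonormal basis $\{\v_1, \ldots, \v_k\}$ assembles into an $n \times k$ matrix $V = [\v_1 \mid \cdots \mid \v_k]$. Under an analytic one-parameter perturbation $\L(t)$ with $\L(0) = \L$, Lancaster's theory (in its Rellich--Kato form for repeated eigenvalues) guarantees that $\lambda$ splits into $k$ analytic branches $\lambda_1(t), \ldots, \lambda_k(t)$, and the classical first-order result identifies the derivatives $\lambda_i'(0)$ with the eigenvalues of the compressed matrix $C := V^T \tfrac{d\L}{dt}(0)\, V$. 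The scalar quantity naturally attached to $\lambda$ is then the average of these branch derivatives,
\begin{align*}
    \dlambda{E_C} = \frac{1}{k}\sum_{i=1}^k \lambda_i'(0) = \frac{1}{k}\,\mathrm{tr}(C) = \frac{1}{k}\sum_{i=1}^k \v_i^T \tfrac{d\L}{dt}(0)\, \v_i.
\end{align*}

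From this point, obtaining (\ref{L(G)}) is essentially routine. Each summand $\v_i^T \tfrac{d\L}{dt}(0)\, \v_i$ can be expanded by repeating the calculation that produced (\ref{the-other-formula}) in the simple case: that derivation uses only that $\v_i$ is a unit vector satisfying $\L \v_i = \lambda \v_i$, which still holds for every basis vector of $E_\lambda$ (all with the \emph{same} eigenvalue $\lambda$). Substituting each expanded expression back into the displayed sum and collecting terms reproduces the formula in the statement of the lemma.

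Independence of the choice of orthonormal basis is then immediate from the cyclic invariance of trace. If $V' = V Q$ for an orthogonal $Q \in O(k)$, the columns of $V'$ form another orthonormal basis of $E_\lambda$, and
\begin{align*}
    \mathrm{tr}\bigl(V'^T \tfrac{d\L}{dt}(0)\, V'\bigr) = \mathrm{tr}\bigl(Q^T V^T \tfrac{d\L}{dt}(0)\, V\, Q\bigr) = \mathrm{tr}\bigl(V^T \tfrac{d\L}{dt}(0)\, V\bigr).
\end{align*}
The main obstacle is conceptual rather than computational: one must justify that averaging the $k$ branch derivatives is the correct extension of the notion of ``edge derivative'' in the degenerate case, since $\lambda(t)$ is no longer a single smooth scalar at $t=0$ and each individual $\v_i^T \tfrac{d\L}{dt}(0) \v_i$ is not itself the derivative of any canonical branch. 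Once the averaging convention is adopted, the remainder of the argument reduces to trace invariance combined with the already-established simple-eigenvalue formula.
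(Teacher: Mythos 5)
The paper does not actually prove this lemma --- it imports it verbatim from Aksoy, Purvine, and Young \cite{APY21} (their Lemma 3) and only remarks that the averaged quantity is independent of the choice of orthonormal basis for the eigenspace. So there is no in-paper argument to compare against; judged on its own, your reconstruction is correct and is the standard justification. The three ingredients are all sound: (i) Rellich's theorem for analytic symmetric families gives $k$ differentiable branches whose first derivatives are the eigenvalues of the compression $V^T \frac{d\L}{dt}(0)V$, so the average of the branch derivatives is $\frac1k \operatorname{tr}\bigl(V^T \frac{d\L}{dt}(0)V\bigr)$; (ii) expanding each diagonal entry $\v_i^T \frac{d\L}{dt}(0)\v_i$ really does use only that $\v_i$ is a unit eigenvector for $\lambda$, so the simple-eigenvalue computation of (\ref{the-other-formula}) carries over term by term (note that $\frac{d\L}{dt}$ includes the contributions from the changing degrees $d_x, d_y$, not just the changing adjacency entry --- your appeal to the earlier derivation covers this); and (iii) basis independence follows from cyclic invariance of the trace. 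You are also right that the one non-computational point is the convention that ``the derivative'' of a degenerate eigenvalue means the average of the branch derivatives; that is exactly the convention adopted in \cite{APY21}, and the paper's surrounding prose makes the same point. Your write-up supplies more justification than the paper does, and nothing in it would fail.
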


\begin{obs}
This derivative is linear in edges in $E_C$. Therefore, we consider the derivative of an eigenvalue with respect to a single edge knowing we can combine results to find the derivative with respect to any collection of edges.
\end{obs}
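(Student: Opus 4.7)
The plan is to read the claim directly off the formula in Lemma \ref{deriv definition}. The expression
\[
\dlambda{E_C} = \frac{1}{k}\sum_{i=1}^{k}\left[(1-\lambda)\sum_{\{x,y\}\in E_C}\left(\frac{\v_{i,x}^2}{d_x}+\frac{\v_{i,y}^2}{d_y}\right) - 2\sum_{\{x,y\}\in E_C}\frac{\v_{i,x}\v_{i,y}}{\sqrt{d_xd_y}}\right]
\]
is, after interchanging the two finite sums, a sum over $\{x,y\}\in E_C$ whose individual summand depends only on that edge's endpoints (through $\v_{i,x}$, $\v_{i,y}$, $d_x$, $d_y$) together with the fixed eigendata $(\lambda, \v_i)$ of the unperturbed $\L$. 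Crucially, the degrees $d_x,d_y$ appearing here are the original degrees of $G$, not the perturbed ones, because the derivative is evaluated at $t=0$. This is what allows the per-edge contributions to decouple.

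With this structure in hand, I would fix a disjoint decomposition $E_C = E_1 \sqcup E_2$, split each inner sum over $E_C$ into a sum over $E_1$ plus a sum over $E_2$, and then apply Lemma \ref{deriv definition} separately to each piece. The outer average $\frac{1}{k}\sum_{i=1}^k$ passes through unchanged (the same orthonormal basis $\{\v_1,\ldots,\v_k\}$ of the $\lambda$-eigenspace of the unperturbed $\L$ is used in all three evaluations), so the computation yields
\[
\dlambda{E_C} = \dlambda{E_1} + \dlambda{E_2}.
\]
A straightforward induction on $|E_C|$ then gives $\dlambda{E_C} = \sum_{e \in E_C} \dlambda{e}$, reducing the study of arbitrary edge sets to the single-edge case.

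For a more calculus-flavored justification, one can parameterize the perturbation by independent weights $t_e$, one for each $e\in E_C$, and observe that $\dlambda{E_C}$ is the directional derivative of $\lambda(t_e)_{e\in E_C}$ at the origin in the all-ones direction. By the chain rule this equals $\sum_{e\in E_C}\partial\lambda/\partial t_e$ at the origin, and each partial derivative is exactly the single-edge derivative $\dlambda{e}$ obtained from Lemma \ref{deriv definition} with $E_C = \{e\}$.

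There is essentially no obstacle here; the only point that deserves a sentence of care is why the nonlinear dependence of the perturbed degrees on the weights $t_e$ does not spoil linearity. The answer is that $d_x$ and $d_y$ enter the closed-form derivative only through their values at $t=0$, so the formula is genuinely a linear functional of the indicator vector of $E_C$, and additivity over disjoint edge sets is immediate.
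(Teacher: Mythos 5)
Your argument is correct and is exactly the justification the paper intends (the paper states this as an unproved Observation, reading linearity directly off the fact that the formula in Lemma \ref{deriv definition} is a sum of per-edge terms involving only the unperturbed eigendata and degrees). Your extra remark about why the $t$-dependence of the perturbed degrees does not spoil linearity is a sensible clarification but not a departure from the paper's route.
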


The interpretation and results of Lemma \ref{deriv definition} are not constrained to edges. There are meaningful and interesting results found by taking the derivative with respect to a non-adjacent pair of vertices, which we call a \textit{non-edge}. 

Let us begin with a small example of the edge derivative (of both edges and non-edges) and some observations. 
\begin{figure}[ht]
    \centering
    \begin{tikzpicture}
     \node[vertex, label=$v_3$] (v1) at ({0 + cos(0)},{sin(0)}) {};
     \node[vertex, label=$v_1$] (v2) at ({0 + cos(120)},{sin(120)}) {};
     \node[vertex, label=below:$v_2$] (v3) at ({0 + cos(240)},{sin(240)}) {};
     \node[vertex, label=$v_4$] (v4) at ({0 + cos(0) + abs(sin(120) - sin(240))}, {sin(120)}) {};
     \node[vertex, label=$v_5$] (v5) at ({0 + cos(0) + abs(sin(120) - sin(240))}, {sin(240)}) {};
     
     \draw[thick] (v5)--(v1)--(v2)--(v3)--(v1)--(v4);
    \end{tikzpicture}
    \caption{A graph $G$ and $\spec_{\L}(G)=\left\{0, 1, \frac{3}{2}, \frac{5 \pm \sqrt{5}}{4}\right\}$.}
    \label{fig:ex1}
\end{figure}
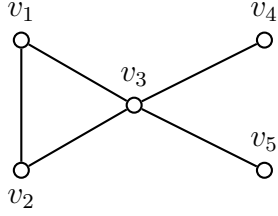

\begin{example}\label{derivatives}

Consider the graph in Figure \ref{fig:ex1} and its corresponding spectrum. Every eigenvalue is simple, and we can find the eigenvectors and calculate the edge derivative for each pair of vertices.
Below are the resulting edge derivatives, separated into edges and non-edges:

\begin{center}
\begin{tabular}{cc}
     \begin{tabular}{|c||r|r|r|r|r|}
    \hline
    \diagbox{$\downarrow e$}{$\lambda \rightarrow$} & $0$ & $1$ & $\frac32$ & $\frac{5 + \sqrt{5}}{4}$ & $\frac{5 -\sqrt{5}}{4}$\\
    \hline
    \hline
    $\{v_1,v_2\}$ & $0$ & $0$ & $\phantom{-}\frac{1}{4}$& $-0.07$ & $-0.18$\\
     \hline
    $\{v_1,v_3\}$ & $0$ & $0$ & $-\frac{1}{8}$& $-0.02$ & $0.15$\\
    \hline
    $\{v_2,v_3\}$ & $0$ & $0$ & $-\frac{1}{8}$& $-0.02$ & $0.15$\\
    \hline
    $\{v_3,v_4\}$ & $0$ & $0$ & $0$& $0.06$ & $-0.06$\\
    \hline
    $\{v_3,v_5\}$ & $0$ & $0$ & $0$& $0.06$ & $-0.06$\\
    \hline
    \end{tabular} &  
     \begin{tabular}{|c||r|r|r|r|r|}
    \hline
    \diagbox{$\downarrow e$}{$\lambda \rightarrow$} & $0$ & $1$ & $\frac32$ & $\frac{5 + \sqrt{5}}{4}$ & $\frac{5 -\sqrt{5}}{4}$\\
    \hline
    \hline
    $\{v_1,v_4\}$ & $0$ & $0$ &$-\frac{1}{8}$& $-0.30$ & $0.43$\\
    \hline
    $\{v_1,v_5\}$ & $0$ & $0$ & $-\frac{1}{8}$& $-0.30$ & $0.43$\\
    \hline
    $\{v_2,v_4\}$ & $0$ & $0$ & $-\frac{1}{8}$& $-0.30$ & $0.43$\\
    \hline
    $\{v_2,v_5\}$ & $0$ & $0$ & $-\frac{1}{8}$& $-0.30$ & $0.43$\\
    \hline
    $\{v_4,v_5\}$ & $0$ & $1$ & $0$ & $-0.72$ & $-0.28$\\
    \hline
    \end{tabular}\\
    (a) Edges & (b) Non-Edges
\end{tabular}
\end{center}

First, we note that the edge derivative of the eigenvalue $\lambda=0$ is always zero. With some knowledge of the eigenvector structure corresponding to this eigenvalue, this is not surprising behavior. 

Observe that for each eigenvalue in the edges table (a), the columns sum to zero. The normalized Laplacian's spectrum is invariant under scaling, meaning changing every edge by $t$ would result in no change to the spectrum. So, this observation is expected. We formally prove that the columns in (a) sum to zero in Theorem \ref{evals-sum-0}. 

In addition, observe that in both tables the rows sum to zero. Since the normalized Laplacian has trace $n$ for any graph, it follows that the sum of the spectrum is constant. We will formally prove this in Theorem \ref{edge-sum-0}. 
\end{example}

The remainder of this section is organized as follows. Section \ref{sec:structure and tools} focuses on the edge derivative for particular graph structures. Section \ref{sec: families} establishes results for general graph families. Finally, we complete our analysis by examining results related to the edge derivative for non-edges in Section \ref{sec: non-edges} and general bounds in Section \ref{sec:bounds}. 
\subsection{Special Graph Structure}\label{sec:structure and tools}

For any graph, the normalized Laplacian always has the eigenvalue $\lambda = 0$, whose multiplicity is the number of connected components in the graph. Since making a small change to an edge would not change the number of connected components in a graph, we expect the edge derivative of this eigenvalue to always be zero. To verify this, there is a well-known eigenvector associated with $\lambda = 0$, namely $D^{\frac12} \mathbbm{1}$.

\begin{thm}\label{lambda-zero}
Let $G$ be a connected graph.
Then, for the eigenvalue $\lambda = 0$ of $\L_G$, the derivative $\dlambda{\{x,y\}}$ is zero with respect to any vertices $x,y$.
\end{thm}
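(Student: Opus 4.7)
The plan is to apply the edge derivative formula from Lemma \ref{deriv definition} directly, using the well-known eigenvector $D^{1/2}\mathbbm{1}$ for $\lambda = 0$. Since $G$ is connected, $\lambda = 0$ is a simple eigenvalue, so I can use the simple-eigenvalue form \eqref{the-other-formula} with the unique (up to sign) unit eigenvector, and there is no need to worry about averaging over a basis.

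First I would normalize the eigenvector. Since $\|D^{1/2}\mathbbm{1}\|^2 = \sum_z d_z = 2|E(G)|$, the unit eigenvector is $\v = \frac{1}{\sqrt{2|E(G)|}}\,D^{1/2}\mathbbm{1}$, so that $\v_z = \sqrt{d_z/(2|E(G)|)}$ for each vertex $z$. Then I would compute the three pieces appearing in \eqref{the-other-formula} for an arbitrary pair $\{x,y\}$:
\begin{align*}
\frac{\v_x^2}{d_x} \;=\; \frac{1}{2|E(G)|}, \qquad \frac{\v_y^2}{d_y} \;=\; \frac{1}{2|E(G)|}, \qquad \frac{\v_x \v_y}{\sqrt{d_x d_y}} \;=\; \frac{1}{2|E(G)|}.
\end{align*}
Substituting these into \eqref{the-other-formula} with $\lambda = 0$ gives
\begin{align*}
\frac{d\lambda}{d\{x,y\}} \;=\; (1-0)\!\left(\frac{1}{2|E(G)|} + \frac{1}{2|E(G)|}\right) - 2\cdot\frac{1}{2|E(G)|} \;=\; 0.
\end{align*}

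There is no real obstacle here; the only subtlety is justifying the use of the simple-eigenvalue formula, which follows from connectedness of $G$ guaranteeing that the nullity of $\mathcal{L}_G$ equals one. The argument works uniformly for edges and non-edges, which matches the observation in Example \ref{derivatives} that the $\lambda = 0$ column of both tables is identically zero.
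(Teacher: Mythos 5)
Your proof is correct and takes essentially the same route as the paper: both plug the eigenvector $D^{1/2}\mathbbm{1}$ for $\lambda=0$ into the edge derivative formula and observe the terms cancel, with the only cosmetic difference being that you normalize the eigenvector explicitly while the paper notes the normalization constant factors out.
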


\begin{proof}
Let $G$ be a connected graph. Then, for $\lambda=0$, it follows that $k=1$. Let $x,y$ be any two vertices in $G$.
Therefore,
\begin{align*}
    \dlambda{\{x,y\}} &= \frac{\sqrt{d_x}^2}{d_x}+\frac{\sqrt{d_y}^2}{d_y}-2 \frac{\sqrt{d_x}\sqrt{d_y}}{\sqrt{d_xd_y}} \\
    &= (1 + 1) - 2 \cdot 1 \\
    &= 0. 
\end{align*}
Note that the eigenvector does not need to be normalized because the normalization scalar can be factored out of each sum, and the resulting value will still be 0.
\end{proof}

Since every connected graph has an eigenvalue of 0, and changing the weight of an edge does not change whether a graph is connected, this result is expected.
Additionally, since the eigenvalue derivative is linear in edges, we have the following corollary.
\begin{cor}
Let $E_C$ be a collection of edges of $G$. Then, $\dlambda{E_C}=0$ for the eigenvalue $\lambda = 0$.
\end{cor}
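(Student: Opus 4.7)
The plan is to reduce the statement to the single-edge case already handled by Theorem \ref{lambda-zero}, exploiting the linearity of the edge derivative in the edges of $E_C$. This linearity is essentially immediate from formula (\ref{L(G)}) in Lemma \ref{deriv definition}: both terms inside the bracket are sums indexed by $\{x,y\} \in E_C$, so splitting the outer sum over $E_C$ into its individual edges gives
\begin{align*}
    \frac{d\lambda}{dE_C} = \sum_{\{x,y\} \in E_C} \frac{d\lambda}{d\{x,y\}}.
\end{align*}
This is exactly the content of the observation preceding the corollary.

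Next, I would apply Theorem \ref{lambda-zero} termwise. That theorem guarantees that for the eigenvalue $\lambda = 0$ of the normalized Laplacian of a connected graph, each individual $\frac{d\lambda}{d\{x,y\}}$ vanishes, regardless of whether $\{x,y\}$ is an edge or a non-edge. Substituting zero for every summand in the displayed equation above yields $\frac{d\lambda}{dE_C} = 0$, completing the proof.

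I do not foresee any obstacle: the argument is purely a two-line application of linearity plus the previous theorem. The only minor subtlety worth a sentence is that Theorem \ref{lambda-zero} is stated for a connected graph (so the $\lambda = 0$ eigenspace is one-dimensional and spanned by $D^{1/2}\mathbbm{1}$), and the corollary should inherit this same connectedness hypothesis implicitly; if one wanted to extend to disconnected graphs, one would instead apply Lemma \ref{deriv definition} with the orthonormal basis of indicator-like vectors on each component and observe the same cancellation occurs componentwise.
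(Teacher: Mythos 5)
Your argument is correct and is exactly the paper's intended justification: the corollary is stated immediately after the observation that the derivative is linear in the edges of $E_C$, so one splits $\dlambda{E_C}$ into a sum of single-edge derivatives and applies Theorem \ref{lambda-zero} termwise. Your closing remark about the connectedness hypothesis (and the componentwise extension via Lemma \ref{deriv definition}) is a reasonable added precision, but otherwise the proof matches the paper's.
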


A graph has eigenvalue $\lambda=2$ if and only if the graph is bipartite. Additionally, the multiplicity is the number of bipartite components of the graph. It also has a well-known eigenvector (for a connected graph).  This allows us to establish the following results about the derivatives. 

\begin{thm}\label{lambda-two}
Let $G$ be a connected bipartite graph.
Then, for the eigenvalue $\lambda = 2$ of $\L_G$, the derivative $\dlambda{\{x,y\}}$ is zero with respect to any edge $\{x,y\}$.
\end{thm}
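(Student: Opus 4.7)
The plan is to mimic the proof of Theorem \ref{lambda-zero} but use the well-known eigenvector of $\L_G$ associated with $\lambda = 2$ for a connected bipartite graph. Concretely, if $G$ is connected and bipartite with bipartition $V(G) = V_1 \cup V_2$, then $\lambda = 2$ is a simple eigenvalue (so $k = 1$) with an associated eigenvector $\v$ given by $\v_z = \sqrt{d_z}$ if $z \in V_1$ and $\v_z = -\sqrt{d_z}$ if $z \in V_2$. I would open by citing these standard facts about the bipartite eigenvalue and its eigenvector so that the computation downstream has a unique, explicit $\v$ to plug in; no normalization is necessary since, as noted in the proof of Theorem \ref{lambda-zero}, any scalar multiple factors out of every term of \eqref{the-other-formula} and the final value is still zero.

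Next I would apply the simple-eigenvalue form of the edge derivative, equation \eqref{the-other-formula}, to the eigenpair $(\lambda, \v) = (2, \v)$ and an arbitrary edge $\{x, y\}$ of $G$. The critical structural observation is that since $G$ is bipartite, any edge $\{x,y\}$ must have its endpoints in different parts; without loss of generality, $x \in V_1$ and $y \in V_2$, so $\v_x = \sqrt{d_x}$ and $\v_y = -\sqrt{d_y}$. Substituting these values into \eqref{the-other-formula}, the first sum contributes
\[
(1-2)\left(\frac{d_x}{d_x} + \frac{d_y}{d_y}\right) = -2,
\]
while the second sum contributes
\[
-2\cdot \frac{\sqrt{d_x}\,(-\sqrt{d_y})}{\sqrt{d_xd_y}} = 2,
\]
and these cancel to give $\dlambda{\{x,y\}} = 0$.

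There is not much of an obstacle here; the only subtle point worth flagging is that the statement is specifically restricted to edges $\{x,y\}$ of $G$, not arbitrary pairs of vertices, and the proof genuinely uses this: the cancellation relies on the endpoints lying on opposite sides of the bipartition, which is guaranteed precisely when $\{x,y\} \in E(G)$. For a non-edge pair of same-part vertices the signs of $\v_x$ and $\v_y$ agree and the second term flips sign, so the derivative would not vanish in general. I would include a short remark making this contrast explicit, and then invoke the linearity of the derivative in $E_C$ (the observation following Lemma \ref{deriv definition}) to obtain the obvious corollary that $\frac{d\lambda}{dE_C} = 0$ for any collection $E_C \subseteq E(G)$, paralleling the corollary following Theorem \ref{lambda-zero}.
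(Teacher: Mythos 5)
Your proposal is correct and follows essentially the same route as the paper: both use the standard eigenvector $\v = D^{1/2}\begin{bmatrix}\mathbbm{1}\\ -\mathbbm{1}\end{bmatrix}$ for the simple eigenvalue $\lambda=2$, observe that an edge's endpoints lie in opposite parts so $\v_x$ and $\v_y$ have opposite signs, and plug into the simple-eigenvalue derivative formula to get $-2+2=0$, noting that normalization is irrelevant. Your added remark contrasting same-part non-edges is consistent with what the paper proves separately in Theorem \ref{bipartite-nonedge-samepart}.
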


\begin{proof}
Let $G$ be a connected bipartite graph.
The eigenvalue $\lambda = 2$ in $\L$ has multiplicity $k = 1$ and corresponds to the eigenvector $\v=D^\frac12 \begin{bmatrix}
    \mathbbm{1}\\
    -\mathbbm{1}
\end{bmatrix}$.

Since $G$ is bipartite, we know that $x$ and $y$ are in different parts, and so $\v_x$ and $\v_y$ have opposite signs. Thus,
\begin{align*}
     \dlambda{\{x,y\}}&=(1 - 2) \left( \frac{\v_ x^2}{d_x} + \frac{\v_ y^2}{d_y} \right)-2 \frac{\v_x \v_ y}{\sqrt{d_xd_y}}\\
     &= -1 \left(\frac{(\pm \sqrt{d_x})^2}{d_x} + \frac{(\mp \sqrt{d_y})^2}{d_y}\right) - 2 \frac{(\pm \sqrt{d_x}) (\mp \sqrt{d_y})}{\sqrt{d_xd_y}} \\
     &= -(1 + 1) - 2 (-1) \\
     &=0.
\end{align*}
Here as well, the eigenvector does not need to be normalized because the normalization scalar can be factored out of each sum, and the resulting value will still be 0.
\end{proof}

This result is also expected, as changing the weight of an edge does not change whether a graph is bipartite.
That is, if we start with a bipartite graph and change the weight of one edge, the resulting graph will also be bipartite, and thus, will also have 2 as an eigenvalue.

For a bipartite graph, the spectrum of its normalized Laplacian is symmetric about 1.
This means that, for a bipartite graph $G$, if $\lambda \in \spec_{\L}(G)$, then $(2-\lambda)\in \spec_{\L}(G)$.
The eigenvectors of these symmetric eigenvalues also come in pairs, and we show that the edge derivative follows suit. 

\begin{thm} \label{eigval-symmetric}
Let $G=(A,B)$ be a bipartite graph with eigenvalue $\lambda \in \spec_{\L}(G)$. Then \[\frac{d(2-\lambda)}{d\{x,y\}}=-\dlambda{\{x,y\}}\] for any edge $\{x,y\}$. 
\end{thm}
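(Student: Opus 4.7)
The plan is to exploit the well-known pairing between eigenvectors of $\lambda$ and of $2-\lambda$ for a bipartite graph, and then apply Lemma~\ref{deriv definition} termwise.

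First, I would establish the eigenvector correspondence. Ordering vertices so that the vertices of part $A$ come before those of part $B$ makes $D^{-\frac12}AD^{-\frac12}$ block anti-diagonal. A short block computation then shows that if $\v=(\v_A,\v_B)^T$ satisfies $\L\v=\lambda\v$, the sign-flipped vector $\v':=(\v_A,-\v_B)^T$ satisfies $\L\v'=(2-\lambda)\v'$. The map $\v\mapsto\v'$ is an orthogonal involution, so it sends any orthonormal basis $\{\v_1,\dots,\v_k\}$ of the $\lambda$-eigenspace to an orthonormal basis $\{\v'_1,\dots,\v'_k\}$ of the $(2-\lambda)$-eigenspace; in particular the two eigenvalues share the same multiplicity $k$.

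Second, I would plug both paired bases into Lemma~\ref{deriv definition}. The key observation is that any edge $\{x,y\}$ of a bipartite graph has its endpoints in different parts, so $\v'_{i,x}\v'_{i,y}=-\v_{i,x}\v_{i,y}$, while $(\v'_{i,x})^2=\v_{i,x}^2$ and $(\v'_{i,y})^2=\v_{i,y}^2$. In the expression for $\frac{d(2-\lambda)}{d\{x,y\}}$, the prefactor becomes $1-(2-\lambda)=-(1-\lambda)$, which flips the sign of the squared-entry sum; and the sign flip on $\v'_{i,x}\v'_{i,y}$ flips the sign of the cross term. Both contributions therefore pick up a factor of $-1$ relative to the corresponding terms in $\frac{d\lambda}{d\{x,y\}}$, yielding the claimed identity.

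The only subtlety is keeping the multiplicity bookkeeping clean, since Lemma~\ref{deriv definition} averages over an orthonormal basis. Because the sign-flip map is a linear isometry and an involution with the required spectral behavior, it transfers orthonormal bases between the two eigenspaces in lockstep, so the per-basis-vector cancellation above passes straight through the averaged formula with no additional work; there is no serious obstacle beyond verifying this correspondence and checking the sign arithmetic.
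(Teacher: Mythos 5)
Your proposal is correct and follows essentially the same route as the paper: the sign-flip correspondence $(\v_A,\v_B)\mapsto(\v_A,-\v_B)$ between the $\lambda$- and $(2-\lambda)$-eigenspaces, followed by a termwise sign check in Lemma~\ref{deriv definition} using the fact that every edge of a bipartite graph crosses between the parts. If anything, your explicit treatment of the orthonormal-basis transfer via the isometric involution is slightly more careful than the paper's, which passes over that bookkeeping implicitly.
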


\begin{proof}
Let $\v_1=
\begin{bmatrix}
\u
\\
\w
\end{bmatrix}
$ be an eigenvector for eigenvalue $\lambda_1$ of $\L$ such that $\u$ contains the entries corresponding to the vertices in part $A$ and $\w$ contains the entries corresponding to the vertices in part $B$. 
Since the eigenvalues of the normalized Laplacian are symmetric about 1, it follows that $\v_2=\begin{bmatrix}
    \u
    \\
    \mathbf{-w}
    \end{bmatrix}$ is a unit eigenvector for  the eigenvalue $\lambda_2=2-\lambda_1$ of $\L$. 

Consider the edge derivative of $\lambda_2$ with respect to $\{x,y\}$ where $x$ and $y$ are from different parts of the bipartite graph.
We have
\begin{align*}
    \frac{d\lambda_2}{d\{x,y\}} &= \frac{1}{k} \sum_{i=1}^k \left[ (1-\lambda_2) \left( \frac{\u_{i,x}^2}{d_x} + \frac{(-\w)_{i,y}^2}{d_y} \right) - 2\frac{\u_{i,x}(-\w)_{i,y}}{\sqrt{d_xd_y}} \right]
    \\
    &= \frac{1}{k}\sum_{i=1}^k \left[
    -(1 - \lambda_1) \left(\frac{\u_{i,x}^2}{d_x} + \frac{\w_{i,x}^2}{d_x}\right) + 2\frac{\u_{i,x}\w_{i,y}}{\sqrt{d_xd_y}} \right]
    \\
    &= -\frac{1}{k}\sum_{i=1}^k \left[
    (1 - \lambda_1)\left(\frac{\u_{i,x}^2}{d_x} + \frac{\w_{i,x}^2}{d_x}\right) - 2\frac{\u_{i,x} \w_{i,y}}{\sqrt{d_xd_y}}\right]
    \\
    &= - \frac{d\lambda_1}{d\{x,y\}}. 
\end{align*}
\end{proof}

Another graph structure that relates to the structure of the eigenvectors is twin vertices.
Two vertices $x$ and $y$ are said to be \emph{twin vertices} if their neighborhoods are equal (apart from each other), i.e., if $N(x) \setminus \{y\} = N(y) \setminus \{x\}$.
We say $x$ and $y$ are \emph{connected twins} if they are twin vertices which are adjacent. 
If a graph has twin vertices $x_1,x_2$, then $[1, -1, 0, \ldots, 0]^T$ is an eigenvector for all symmetric graph matrices. 
Using this, we obtain the following results about the edge derivative of the spectrum of a graph with respect to the edge between connected twins.

\begin{thm}\label{twins connected}
Let $G$ be a graph with connected twins $x,y$ such that $|N(x)|=|N(y)|=d$. 
Then, $\lambda= \frac{d+1}{d} \in \spec_{\L}(G)$, and $\dlambda{\{x,y\}} \leq \frac{1}{k} \left( \frac{d-1}{d^2} \right)$, where $k$ is the multiplicity of $\lambda$. 
\end{thm}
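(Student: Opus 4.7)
The plan is to construct a convenient orthonormal basis for the $\lambda$-eigenspace in which one basis vector captures all the ``positive'' contribution and every other basis vector contributes non-positively, so that the full averaged sum in the derivative formula from Lemma \ref{deriv definition} is bounded above by the lone positive term.

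First I would verify that $\lambda = \frac{d+1}{d}$ is indeed an eigenvalue by exhibiting the standard twin-vertex eigenvector $\v = e_x - e_y$ (zero on all other coordinates). A direct computation of $\L \v$ using $\L = I - D^{-1/2} A D^{-1/2}$ shows: at $x$ we pick up $1 - \frac{-1}{\sqrt{d \cdot d}} = \frac{d+1}{d}$; at $y$ we pick up the negative of this; at any common neighbor $z$ of $x$ and $y$, the contributions from $x$ and $y$ cancel; and at any other vertex $\v$ vanishes. Normalizing gives the unit vector $\v_1 = \tfrac{1}{\sqrt{2}}(e_x - e_y)$.

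Next I would extend $\v_1$ to an orthonormal basis $\{\v_1, \v_2, \ldots, \v_k\}$ of the $\lambda$-eigenspace. The key structural observation is that orthogonality of any $\v_i$ ($i \geq 2$) with $\v_1$ forces $\v_{i,x} = \v_{i,y}$. Plugging this into Lemma \ref{deriv definition} with $1 - \lambda = -\tfrac{1}{d}$ and $d_x = d_y = d$, the $i$-th summand collapses to
\begin{align*}
-\frac{1}{d}\cdot\frac{2\v_{i,x}^2}{d} - \frac{2\v_{i,x}^2}{d} = -\frac{2\v_{i,x}^2(d+1)}{d^2},
\end{align*}
which is $\leq 0$. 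For the $\v_1$ term, direct substitution gives $-\tfrac{1}{d}\cdot\tfrac{1}{d} + \tfrac{1}{d} = \tfrac{d-1}{d^2}$. Averaging by $\tfrac{1}{k}$ as in the lemma and dropping the non-positive tail yields the claimed bound $\dlambda{\{x,y\}} \leq \frac{1}{k}\cdot\frac{d-1}{d^2}$.

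The main subtlety I expect is justifying the choice of orthonormal basis: by the remark following Lemma \ref{deriv definition} (from \cite{APY21}), the edge derivative is independent of the chosen orthonormal decomposition of the eigenspace, so we are free to pick the basis that starts with $\v_1$. Once that is invoked, the rest is routine algebra, and the argument naturally gives the inequality rather than an equality because the multiplicity $k$ may exceed $1$ and the extra eigenvectors only push the derivative downward.
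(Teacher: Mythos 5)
Your proposal is correct and follows essentially the same route as the paper's proof: exhibit the signed twin eigenvector $\tfrac{1}{\sqrt{2}}(e_x - e_y)$ for $\lambda = \tfrac{d+1}{d}$, extend it to an orthonormal basis of the eigenspace, use orthogonality to force $\v_{i,x} = \v_{i,y}$ for the remaining basis vectors, and observe that those terms contribute non-positively. Your added details (the explicit verification that $\L\v = \tfrac{d+1}{d}\v$ and the explicit appeal to basis-independence of the averaged derivative) only make the argument more complete.
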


\begin{proof}
Let $G$ be a graph with connected twins $x,y$ as described above. Consider a vector $\v$ where $\v_x=\frac{1}{\sqrt{2}}$, $\v_y=-\frac{1}{\sqrt{2}}$, and $\v_i=0$ for all other vertices $i$.  
By computation, $\v$ is an eigenvector for $\L_G$ with eigenvalue $\frac{d+1}{d}$. 

First, suppose the multiplicity $k$ of $\lambda= \frac{d+1}{d}$ is 1 (so $\v$ is the only eigenvector).
Then,
\begin{align*}
    \dlambda{\{x,y\}} & = (1-\lambda) \left(\frac{(1/\sqrt{2})^2}{d}+ \frac{(-1/\sqrt{2})^2}{d}\right) -2 \frac{(1/\sqrt{2})(-1/\sqrt{2})}{d}\\
    &= \left(1-\frac{d+1}{d}\right)\left(\frac{1}{d}\right) + \frac{1}{d}\\
    &= \frac{d-1}{d^2}.
\end{align*}

Now, let $k>1$. 
Since $\v$ is a unit vector, we can extend it to an orthonormal basis of the eigenspace corresponding to $\lambda$, denoted $\{\v, \w_1, \ldots, \w_{k-1}\}$. 
Moreover, for any vector $\w_i$ in this basis, the $x$-th and $y$-th entries will be equal, since $\w_i$ is orthogonal to $\v$, so $\w_{i,x}=\w_{i,y}$ for all $i$. 
Thus, 
\begin{align*}
    \dlambda{\{x,y\}} &= \frac{1}{k} \left[\frac{d-1}{d^2} + \sum_{i=1}^{k-1}\left(1-\frac{d+1}{d}\right)\left(\frac{\w_{i,x}^2}{d}+\frac{\w_{i,y}^2}{d}\right) -2 \frac{\w_{i,x}\w_{i,y}}{d} \right]\\
    &= \frac{1}{k} \left[\frac{d-1}{d^2} + \sum_{i=1}^{k-1}\left(-\frac{d+1}{d}\right)\left(\frac{2\w_{i,x}^2}{d}\right) \right]\\
    & \leq \frac{1}{k} \left( \frac{d-1}{d^2} \right),
\end{align*}
since the degree and $\w_{i,x}^2$ must always be positive. 
\end{proof}

This is maximized when $d=2$ and $k=1$ for a edge derivative of $\dlambda{\{x,y\}}=\frac14$. 
Interestingly, for graphs on a small number of vertices, this is the largest edge derivative value for a single edge.
We explore more extremal values in Section \ref{sec:bounds}.

\begin{cor}\label{cor: twins connected}
Let $G$ be a graph with connected twins $x,y$ such that $|N(x)|=|N(y)|=d$. For all $\lambda \in \spec_{\L}(G)$, if $\lambda \neq \frac{d+1}{d} $, then $\dlambda{\{x,y\}}\leq 0$. 
\end{cor}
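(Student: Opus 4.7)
The plan is to leverage the special eigenvector constructed in the proof of Theorem \ref{twins connected}, namely the unit vector $\v$ with $\v_x = 1/\sqrt{2}$, $\v_y = -1/\sqrt{2}$, and all other coordinates equal to zero. This $\v$ lies in the eigenspace of $\frac{d+1}{d}$. For any eigenvalue $\lambda \neq \frac{d+1}{d}$ of multiplicity $k$, the entire $\lambda$-eigenspace is orthogonal to $\v$ because $\L$ is real symmetric. Hence for any orthonormal basis $\{\u_1,\dots,\u_k\}$ of this eigenspace, the relation $\langle \u_i,\v\rangle = 0$ forces $\u_{i,x} = \u_{i,y}$ for each $i$.

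With this relation in hand, I would substitute $d_x = d_y = d$ together with $\u_{i,x} = \u_{i,y}$ directly into the formula of Lemma \ref{deriv definition}. Each summand collapses as
\[
(1-\lambda)\left(\frac{\u_{i,x}^2}{d}+\frac{\u_{i,y}^2}{d}\right)-2\frac{\u_{i,x}\u_{i,y}}{d} = \frac{2\u_{i,x}^2}{d}\bigl[(1-\lambda)-1\bigr] = -\frac{2\lambda\,\u_{i,x}^2}{d},
\]
so that
\[
\dlambda{\{x,y\}} = -\frac{2\lambda}{kd}\sum_{i=1}^{k}\u_{i,x}^2.
\]
Since every eigenvalue of the normalized Laplacian satisfies $\lambda \geq 0$ and each $\u_{i,x}^2 \geq 0$, the displayed quantity is $\leq 0$, yielding the corollary.

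I do not expect a real obstacle. The only subtle point is when $\lambda$ itself has multiplicity greater than one: the orthogonality-to-$\v$ argument must be applied to every vector in an orthonormal basis of the $\lambda$-eigenspace, which is legitimate because orthogonality to a single fixed vector is preserved under any orthonormal change of basis within that eigenspace. Structurally, the argument mirrors the second half of the proof of Theorem \ref{twins connected}, where the same observation forced $\w_{i,x} = \w_{i,y}$ on the complementary basis vectors inside the $(d+1)/d$-eigenspace; here we instead apply it outside that eigenspace.
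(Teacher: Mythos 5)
Your proposal is correct and follows essentially the same route as the paper: both use the orthogonality of the $\lambda$-eigenspace to the twin-difference vector $\v$ to force $\u_{i,x}=\u_{i,y}$, and then the formula collapses to $-\frac{2\lambda}{kd}\sum_i \u_{i,x}^2 \leq 0$. If anything, you spell out the orthogonality justification more explicitly than the paper does.
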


\begin{proof}
Consider the set of orthonormal eigenvectors constructed in the proof of Theorem \ref{twins connected}. 
It follows, for all eigenvalues associated with eigenvectors $\w$, that
\begin{align*}
    \dlambda{\{x,y\}} &= \frac{1}{k} \left[ \sum_{i=1}^{k}\left(1-\lambda\right)\left(\frac{\w_{i,x}^2}{d}+\frac{\w_{i,y}^2}{d}\right) -2 \frac{\w_{i,x} \w_{i,y}}{d} \right]\\
    &= \frac{1}{k} \left[\sum_{i=1}^{k}\left(-\lambda\right)\left(\frac{2 \w_{i,x}^2}{d}\right) \right]\\
    & \leq 0. 
\end{align*}
\end{proof}

Looking at all the eigenvalues for a particular edge, we saw in Example \ref{derivatives} that the edge derivatives sum to zero. We will now prove this observation.

\begin{thm}\label{edge-sum-0}
  Let $G$ be a graph with vertices $x$ and $y$. Let $\lambda_1, \ldots, \lambda_n$ be the eigenvalues of the normalized Laplacian of $G$. Then, 
  \begin{align*}
      \sum_{i=1}^n \frac{d \lambda_i}{d\{x,y\}}=0.
  \end{align*}  
\end{thm}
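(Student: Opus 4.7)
The plan is to use the fact that the trace of the normalized Laplacian is constant in edge weights. Recall $\L(G) = I - D^{-1/2}AD^{-1/2}$; since the adjacency matrix $A$ of a simple graph has zero diagonal, each diagonal entry of $D^{-1/2}AD^{-1/2}$ equals $d_i^{-1/2}\cdot 0 \cdot d_i^{-1/2} = 0$, so every diagonal entry of $\L$ equals $1$, giving $\text{tr}(\L) = n$.

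Next, I parameterize the weight of the pair $\{x,y\}$ by adding $t$ to the $(x,y)$ and $(y,x)$ entries of $A$ (this covers both edges and non-edges), producing $\L(t)$. The perturbation affects only off-diagonal entries of $A(t)$ and the two degrees $d_x(t), d_y(t)$; the diagonal of $D(t)^{-1/2}A(t)D(t)^{-1/2}$ remains identically zero, so $\text{tr}(\L(t)) = n$ for every $t$ in a neighborhood of $0$. Since the trace equals the sum of eigenvalues counted with multiplicity, $\sum_{i=1}^n \lambda_i(t) \equiv n$, and differentiating at $t=0$ yields $\sum_{i=1}^n \frac{d\lambda_i}{dt}\big|_{t=0} = 0$.

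The remaining task is to match this trace derivative with the sum appearing in the statement. When an eigenvalue $\lambda$ has multiplicity $k$, Lemma \ref{deriv definition} defines $\frac{d\lambda}{d\{x,y\}}$ as the average of the derivatives of the $k$ analytic eigenvalue branches that coincide at $\lambda$ when $t=0$. Listing this eigenvalue $k$ times in $\sum_{i=1}^n \frac{d\lambda_i}{d\{x,y\}}$ therefore contributes exactly the sum of those $k$ branch derivatives; adding over all distinct eigenvalues reassembles $\sum_{i=1}^n \frac{d\lambda_i}{dt}\big|_{t=0}$, which we showed above equals $0$.

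The main obstacle is only this last reconciliation step, and even it is routine once Lemma \ref{deriv definition} is on hand — the trace observation itself is essentially immediate from the form of $\L$.
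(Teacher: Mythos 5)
Your proposal is correct and follows essentially the same route as the paper: observe that $\mathrm{tr}(\L(t)) = n$ identically because the diagonal of $D(t)^{-1/2}A(t)D(t)^{-1/2}$ stays zero, then differentiate the eigenvalue sum. You in fact supply more detail than the paper does (verifying the diagonal is unaffected by the perturbation and reconciling the averaged derivative of Lemma~\ref{deriv definition} with the sum of analytic branch derivatives at a repeated eigenvalue), but the underlying argument is the same.
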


\begin{proof}
    Consider the trace of the normalized Laplacian $\L$ of graph $G$:
    \begin{align*}
        tr(\L) = \lambda_1 + \lambda_2 + \cdots + \lambda_n = n,
    \end{align*}
    since $\L$ has ones along its diagonal. For two vertices $x,y$ in $G$, we can find the edge derivative of both sides to obtain the desired result. 
\end{proof}

We now turn our attention away from particular edges and instead to the edge derivative with respect to \textit{all} edges in the graph.  
 
\begin{thm}\label{evals-sum-0}
Let $\lambda$ be an eigenvalue of the normalized Laplacian $\L$ for a graph $G$. The sum of the derivatives of $\lambda$ over all edges in $G$ is zero.
\end{thm}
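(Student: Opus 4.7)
The plan is to use the scale invariance of the normalized Laplacian. By the Observation immediately after Lemma \ref{deriv definition}, the edge derivative is linear in the chosen edge set, so
\begin{align*}
\sum_{\{x,y\}\in E(G)}\frac{d\lambda}{d\{x,y\}}=\frac{d\lambda}{dE(G)}.
\end{align*}
Setting $B=A$ in the parameterization $M(t)=A+tB$ realizes the right-hand side as adding $t$ to every edge weight simultaneously, so $M(t)=(1+t)A$ with degree matrix $D(t)=(1+t)D$, and hence
\begin{align*}
\L(t)=I-\bigl((1+t)D\bigr)^{-1/2}\bigl((1+t)A\bigr)\bigl((1+t)D\bigr)^{-1/2}=I-D^{-1/2}AD^{-1/2}=\L,
\end{align*}
independent of $t$. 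Every eigenvalue is therefore constant along this direction and its derivative vanishes, which is the desired conclusion.

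For transparency I would also record the direct computation via Lemma \ref{deriv definition}. Fix an orthonormal eigenbasis $\v_1,\ldots,\v_k$ for $\lambda$. Summing the first piece over all edges and reindexing by vertex (each $x$ contributes a term $\v_{i,x}^2/d_x$ in exactly $d_x$ edges) gives
\begin{align*}
\sum_{\{x,y\}\in E(G)}\left(\frac{\v_{i,x}^2}{d_x}+\frac{\v_{i,y}^2}{d_y}\right)=\sum_{x\in V(G)}\v_{i,x}^2=\|\v_i\|^2=1.
\end{align*}
For the second piece, using $\L=I-D^{-1/2}AD^{-1/2}$ and $\v_i^T\L\v_i=\lambda$,
\begin{align*}
2\sum_{\{x,y\}\in E(G)}\frac{\v_{i,x}\v_{i,y}}{\sqrt{d_xd_y}}=\v_i^T D^{-1/2}AD^{-1/2}\v_i=1-\lambda.
\end{align*}
Each bracketed expression in the multiplicity-$k$ formula is therefore $(1-\lambda)\cdot 1-(1-\lambda)=0$, and averaging over $k$ preserves the zero.

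The main obstacle is really only bookkeeping: recognizing that the two edge-sums in Lemma \ref{deriv definition} reassemble into $\|\v_i\|^2$ and $\v_i^T D^{-1/2}AD^{-1/2}\v_i$. Higher multiplicity requires no new idea, since the identical calculation applies to each orthonormal basis vector, and the scaling argument gives the whole result essentially for free from the fact that $\L$ depends only on the \emph{ratios} of edge weights.
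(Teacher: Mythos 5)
Your proposal is correct, and it actually contains two complete arguments. The second one (the direct computation) is essentially the paper's own proof: the paper likewise reindexes the first sum by vertex to get $\sum_x \v_x^2 = 1$ and identifies the cross-term sum with $\v^T D^{-1/2}AD^{-1/2}\v = \v^T(I-\L)\v = 1-\lambda$, so that the bracket collapses to $(1-\lambda)-(1-\lambda)=0$. One small point in your favor: the paper only writes this out for multiplicity $1$, whereas you note explicitly that the identical calculation applies to each vector of an orthonormal eigenbasis and survives the averaging over $k$, which is the cleaner way to state it. Your first argument --- setting $B=A$ so that $M(t)=(1+t)A$, $D(t)=(1+t)D$, and $\L(t)\equiv\L$ --- is a genuinely different route. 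The paper gestures at exactly this scale invariance twice ("the normalized Laplacian's spectrum is invariant under scaling... so this observation is expected") but only as informal motivation, never as the proof. As a proof it is legitimate: by the linearity observation the sum over edges equals $\dlambda{E(G)}$, which is $\v^T\frac{d\L}{dt}(0)\v$ for this parameterization, and $\frac{d\L}{dt}\equiv 0$ since $\L(t)$ is constant; because the curve of matrices is constant there is also no issue with eigenvalue crossing or multiplicity. What the scaling argument buys is brevity and a conceptual explanation for free; what the direct computation buys is that it never leaves the formula of Lemma \ref{deriv definition}, so it does not depend on re-deriving the connection between that formula and the actual curve $\L(t)$. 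Either one would suffice; together they are more than is needed, but nothing is wrong.
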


\begin{proof}
Suppose the multiplicity of $\lambda$ is 1. Let $E$ be the set of edges in the graph, and let $V$ be the set of vertices. Then the sum of $\dlambda{\{x,y\}}$ over all edges is given by
\begin{align*}
    \sum_{\{x,y\} \in E} \dlambda{\{x,y\}} &= \sum_{\{x,y\} \in E} \left[ (1-\lambda) \left( \frac{\v_x^2}{d_x} + \frac{\v_y^2}{d_y}\right) - 2\frac{\v_x \v_y}{\sqrt{d_xd_y}}\right] \\
    &= (1-\lambda)\sum_{\{x,y\} \in E}  \left( \frac{\v_x^2}{d_x} + \frac{\v_y^2}{d_y}\right) - 2\sum_{\{x,y\} \in E} \frac{\v_x \v_y}{\sqrt{d_xd_y}}.
\end{align*}
In the first sum, $\v_x$ is counted $d_x$ times for each vertex $x$ in the graph. Thus, 
\begin{align*}
    \sum_{\{x,y\} \in E} \dlambda{\{x,y\}} &= (1-\lambda)\sum_{x \in V}  \left( (d_x)\frac{\v_x^2}{d_x}\right) - 2\sum_{\{x,y\} \in E} \frac{\v_x \v_y}{\sqrt{d_xd_y}} \\
    &= (1-\lambda)\sum_{x \in V}  \left( {\v_x^2}\right) - 2\sum_{\{x,y\} \in E} \frac{\v_x \v_y}{\sqrt{d_xd_y}} \\
    &= (1-\lambda)(1) - 2\sum_{\{x,y\} \in E} \frac{\v_x \v_y}{\sqrt{d_xd_y}} 
\end{align*}
because $\v$ is an orthonormal eigenvector, so the squares of its entries sum to 1.

From the derivation of the normalized Laplacian eigenvalue derivative in \cite{APY21}, the remaining sum can be rewritten to get 
\begin{align*}
    \sum_{\{x,y\} \in E} \dlambda{\{x,y\}} &= (1-\lambda)(1) - \left(D^{-\frac12}\v\right)^T A'(t) \left(D^{-\frac12}\v\right),
\end{align*}
where $D$ is the degree matrix of $G$ and $A'(t)$ is the entry-wise derivative of the adjacency matrix evaluated at $t$. Since all edges are parameterized, $A' = A$. 

Therefore,
\begin{align*}
    \sum_{\{x,y\} \in E} \dlambda{\{x,y\}} &= (1-\lambda) - \v^T D^{-\frac12}A D^{-\frac12}\v \\
    &= (1-\lambda) - \v^T (I-\L) \v \\
    &= (1-\lambda) -\v^T\v + \v^T\L\v  \\
    &= (1-\lambda) - (1-\lambda)\v^T\v \\
    &= 0.
\end{align*}
Thus, we get that the sum of the derivatives of $\lambda$ over all edges in $G$ is zero.
\end{proof}

The edge derivative of an eigenvalue $\lambda$ over all edges can be thought of as the effect of changing each edge by the same amount on $\lambda$. Since the normalized Laplacian's spectrum does not change by scaling the graph by a constant, our result is expected. 

\subsection{Families of Graphs}\label{sec: families}

With these tools we can now find the edge derivatives for several families of graphs. 
Let us begin with edge transitive graphs. A graph is \emph{edge transitive} if for every pair of edges $e_1$ and $e_2$, there exists a graph automorphism that maps $e_1$ to $e_2$. Graphs such as the complete graph and cycle are examples of edge transitive graphs. 

\begin{thm}\label{edge-trans}
    Let $G$ be an edge transitive graph. If $\lambda$ is an eigenvalue of $\L_G$, then $\dlambda{e}=0$ with respect to any edge $e$. 
\end{thm}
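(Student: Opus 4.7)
The plan is to combine two ingredients: (1) graph automorphisms preserve the edge derivative along orbits of edges, and (2) Theorem \ref{evals-sum-0} which says the derivatives of a fixed eigenvalue sum to zero over all edges. Edge transitivity means every edge lies in the same orbit under the automorphism group, so all edge derivatives are equal; combined with the zero-sum condition, each must be zero individually.

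The main step is establishing the automorphism invariance. Let $\sigma$ be an automorphism of $G$ and let $P$ be its associated permutation matrix, defined so that $(P\v)_{\sigma(x)} = \v_x$. Since $\sigma$ preserves adjacency and hence degrees ($d_{\sigma(x)} = d_x$), we have $P^T \L P = \L$, and therefore $P$ maps the eigenspace for any eigenvalue $\lambda$ to itself orthogonally. In particular, if $\{\v_1,\ldots,\v_k\}$ is an orthonormal basis for the $\lambda$-eigenspace, then so is $\{P\v_1,\ldots,P\v_k\}$. Substituting $P\v_i$ into the formula from Lemma \ref{deriv definition} for the edge $e_2 = \{\sigma(x),\sigma(y)\}$, and using $(P\v_i)_{\sigma(x)} = \v_{i,x}$ together with the degree equality, each term in the sum for $e_2$ matches the corresponding term in the sum for $e_1 = \{x,y\}$ under the original basis. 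Since the derivative is independent of the chosen orthonormal basis, this yields
\begin{align*}
\dlambda{e_2} = \dlambda{e_1}.
\end{align*}

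With automorphism invariance in hand, edge transitivity gives a single common value $c = \dlambda{e}$ for every edge $e \in E(G)$. By Theorem \ref{evals-sum-0},
\begin{align*}
0 \;=\; \sum_{e \in E(G)} \dlambda{e} \;=\; |E(G)|\cdot c,
\end{align*}
and since $G$ has at least one edge (otherwise the statement is vacuous), we conclude $c = 0$, as desired.

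The only potentially subtle point is handling eigenvalues of multiplicity greater than one: one must be careful to invoke the basis-independence of the formula in Lemma \ref{deriv definition} rather than trying to track individual eigenvectors under $\sigma$. Apart from this, the argument is almost purely structural, using no detailed computation beyond the bookkeeping of indices under the permutation $\sigma$.
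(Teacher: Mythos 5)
Your proof is correct and follows essentially the same route as the paper: equate all edge derivatives via edge transitivity, then invoke Theorem \ref{evals-sum-0} to force the common value to be zero. The only difference is that you carefully justify the automorphism-invariance step (via the permutation matrix and basis-independence of Lemma \ref{deriv definition}), which the paper simply asserts; this is a welcome addition, not a divergence.
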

\begin{proof}
   Consider an eigenvalue $\lambda$ of the normalized Laplacian of edge transitive graph $G$ with edges $e_1, \ldots, e_m$. For any two edges $e_j$ and $e_k$, we know that $\dlambda{e_j}=\dlambda{e_k}$ since there is an automorphism between $e_j$ and $e_k$.
   Let $e$ be an arbitrary edge.
   Therefore,
   \begin{align*}
       \sum_{i=1}^{|E|} \dlambda{e_i}= |E| \dlambda{e}.
   \end{align*}
   From Theorem \ref{evals-sum-0}, we get that the left-hand side is zero. Therefore, $\dlambda{e}=0$.
\end{proof}

Note that edge transitive graphs include complete graphs, cycle graphs, complete bipartite graphs, balanced completed multipartite graphs, and crown graphs.

\subsection{Edge Derivative of Non-Edges}\label{sec: non-edges}
As we have seen previously, the interpretation (and formula) of the edge derivative is not constrained to edges. 
There are also meaningful results found by taking the derivative with respect to non-adjacent pairs of vertices, which we call \emph{non-edges.} 
These derivatives tell us how the eigenvalues change when adding edges to the weighted graph, rather than just when changing the weights of current edges. 
We can also think of a non-edge as an edge of weight zero.
We note that Theorem \ref{lambda-zero} about $\lambda=0$ directly extends to non-edges.

For a bipartite graph, when $\lambda = 2$, the edge derivative now depends on whether the two vertices in the non-edge are from the same part or not (see Figure \ref{fig:bip}). 

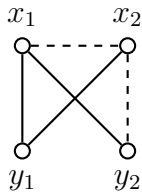
\begin{figure}[ht]
    \centering
    \begin{tikzpicture}[scale=0.7]
    
    
    \pgfmathtruncatemacro{\n}{2}  
    \pgfmathtruncatemacro{\m}{2}  
    
    \pgfmathtruncatemacro{\dyA}{1}  
    \pgfmathtruncatemacro{\dyB}{-1}  
    
    \pgfmathtruncatemacro{\x}{2}  
    
    \foreach \i in {1,...,\n} {
    	\node[vertex, label=$x_\i$] (vA\i) at ({-\x*(\n-1)/2 + \x*(\i-1)}, \dyA) {};
    }
    
    \foreach \j in {1,...,\m} {
    	\node[vertex, label=below:$y_\j$] (vB\j) at ({-\x*(\m-1)/2 + \x*(\j-1)}, \dyB) {};
    }

	\draw[thick] (vA1)--(vB1);
	\draw[thick] (vA2)--(vB1);
	\draw[thick] (vA1)--(vB2);
	
 	\draw[thick,dashed] (vA2)--(vB2); 
 	\draw[thick,dashed] (vA1)--(vA2); 
    
    \end{tikzpicture}
    \caption{A bipartite graph. The pair $\{x_1,x_2\}$ is a non-edge between two vertices in the {same part} and the pair $\{x_2,y_2\}$ is a non-edge between two vertices in {different parts}. Therefore, for $\lambda=2$, $\dlambda{\{x_1,x_2\}}=-\frac{2}{3}$ while $\dlambda{\{x_2,y_2\}}=0$. }
    \label{fig:bip}
\end{figure}

\begin{thm} \label{bipartite-nonedge-diffpart}
Let $G$ be a connected bipartite graph and $\{x,y\}$ be a non-edge between two vertices in different parts of the graph.
Then, for the eigenvalue $\lambda = 2$ of $\L_G$, $\dlambda{\{x,y\}}=0$. 
\end{thm}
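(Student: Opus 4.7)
The plan is to observe that the proof of Theorem \ref{lambda-two} actually never uses the fact that $\{x,y\}$ is an edge of $G$ --- it only uses that $x$ and $y$ lie in different parts of the bipartition. Since the derivative formula in Lemma \ref{deriv definition} is expressed entirely in terms of the eigenvector components $\v_x, \v_y$ and the degrees $d_x, d_y$, whether $\{x,y\}$ is an edge or a non-edge is irrelevant to the computation; what matters is the sign pattern of the eigenvector entries at $x$ and $y$.

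Concretely, I would begin by recalling that for a connected bipartite graph $G$, the eigenvalue $\lambda=2$ of $\L_G$ is simple, with eigenvector proportional to
\[
\v = D^{\frac12}\begin{bmatrix}\mathbbm{1}\\-\mathbbm{1}\end{bmatrix},
\]
where the $+1$'s correspond to one part and the $-1$'s to the other. Hence $\v_u = \pm\sqrt{d_u}$ with the sign determined solely by which part $u$ belongs to. Since $x$ and $y$ lie in different parts, we have $\v_x = \pm\sqrt{d_x}$ and $\v_y = \mp\sqrt{d_y}$, so $\v_x \v_y = -\sqrt{d_x d_y}$.

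Next, I would substitute into formula \eqref{L(G)} with $k=1$ and $\lambda=2$:
\begin{align*}
\dlambda{\{x,y\}} &= (1-2)\left(\frac{\v_x^2}{d_x}+\frac{\v_y^2}{d_y}\right) - 2\frac{\v_x \v_y}{\sqrt{d_x d_y}}\\
&= -(1+1) - 2\cdot(-1) = 0,
\end{align*}
where, as in Theorem \ref{lambda-two}, the normalization constant of $\v$ cancels out because it appears as an overall multiplicative factor in every term of the formula.

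There is essentially no obstacle here: the entire content is the realization that Lemma \ref{deriv definition} is a formula in the eigendata, not a formula that remembers the edge set of $G$. The only thing to be careful about is to state up front that the derivative with respect to a non-edge is defined by exactly the same expression (equivalently, by treating the non-edge as an edge of weight zero, as noted in the paragraph preceding the theorem), so that invoking Lemma \ref{deriv definition} is legitimate.
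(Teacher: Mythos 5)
Your proposal is correct and is exactly the paper's argument: the paper's proof of this theorem simply states that it follows from the same computation as Theorem \ref{lambda-two}, which is precisely the observation you make (the argument uses only the sign pattern of the eigenvector $D^{\frac12}\bigl[\begin{smallmatrix}\mathbbm{1}\\-\mathbbm{1}\end{smallmatrix}\bigr]$ at $x$ and $y$, not the presence of the edge). Your added remark that the non-edge derivative is defined by the same formula is a helpful clarification but does not change the substance.
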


\begin{proof}
This follows from the same argument used in the proof of Theorem \ref{lambda-two}.
\end{proof}

\begin{thm}\label{bipartite-nonedge-samepart}
Let $G$ be a connected bipartite graph and $\{x,y\}$ be a non-edge between two vertices from the same part of the graph.
Then, for the eigenvalue $\lambda = 2$ of $\L_G$,  $\dlambda{\{x,y\}}=-\frac{2}{|E|}$.
\end{thm}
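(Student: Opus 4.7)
The plan is to substitute the known unit eigenvector for $\lambda = 2$ directly into the edge derivative formula \eqref{the-other-formula} and simplify. Since $G$ is connected and bipartite, $\lambda = 2$ is a simple eigenvalue (so $k=1$), and from the proof of Theorem~\ref{lambda-two} we know the corresponding eigenvector (before normalization) is $\v = D^{1/2}[\mathbbm{1}; -\mathbbm{1}]$, with $+\sqrt{d_v}$ in entry $v$ if $v$ is in one part and $-\sqrt{d_v}$ if $v$ is in the other.

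First I would normalize this eigenvector. Its squared norm is
\[
\sum_v d_v = 2|E|,
\]
so the unit eigenvector has entries $\v_v = \pm \sqrt{d_v/(2|E|)}$, where the sign is determined solely by which part of the bipartition contains $v$. The key observation is that when $x$ and $y$ lie in the \emph{same} part, the signs of $\v_x$ and $\v_y$ agree, hence $\v_x \v_y = +\sqrt{d_x d_y}/(2|E|)$ (in contrast to Theorem~\ref{lambda-two}, where the signs were opposite).

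Next, I would substitute into Equation~\eqref{the-other-formula} with $1 - \lambda = -1$:
\begin{align*}
\dlambda{\{x,y\}}
&= -\left(\frac{\v_x^2}{d_x} + \frac{\v_y^2}{d_y}\right) - 2\,\frac{\v_x \v_y}{\sqrt{d_x d_y}} \\
&= -\left(\frac{1}{2|E|} + \frac{1}{2|E|}\right) - 2\cdot\frac{\sqrt{d_x d_y}/(2|E|)}{\sqrt{d_x d_y}} \\
&= -\frac{1}{|E|} - \frac{1}{|E|} = -\frac{2}{|E|}.
\end{align*}

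There is no real obstacle here; the entire argument hinges on correctly normalizing the eigenvector so that the factor $2|E|$ (the sum of degrees) appears, and on recognizing that a same-part non-edge reverses the sign contribution relative to the bipartite-edge case of Theorem~\ref{lambda-two}. The only subtlety worth checking is that the formula~\eqref{the-other-formula} was derived treating $\{x,y\}$ as an edge slot of the weighted graph, which is exactly the setting needed for a non-edge (weight zero) since the partial derivative is taken with respect to the weight of that slot regardless of its current value.
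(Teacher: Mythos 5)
Your proposal is correct and follows essentially the same route as the paper: both use the normalized eigenvector $\frac{1}{\sqrt{2|E|}}D^{1/2}[\mathbbm{1};-\mathbbm{1}]$ for the simple eigenvalue $\lambda=2$, observe that same-part vertices give $\v_x\v_y>0$, and substitute into the edge-derivative formula to obtain $-\tfrac{2}{|E|}$. Your explicit verification that the squared norm is $\sum_v d_v = 2|E|$ is a small welcome addition, but the argument is otherwise identical to the paper's.
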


\begin{proof}
Let $\{x, y\}$ be a non-edge, where both $x$ and $y$ are in the same part of the bipartite graph. 
The normalized corresponding eigenvector is $$\v=\frac{1}{\sqrt{2 |E|}} D^{\frac12} \begin{bmatrix}
    \mathbbm{1}\\
    -\mathbbm{1}
\end{bmatrix},$$
where $|E|$ is the number of edges in the graph.
Now, since both $x$ and $y$ are in the same part, $\v_x$ and $\v_y$ have the same sign. 
Therefore,
\begin{align*}
    \dlambda{\{x,y\}} &= (1 - 2)\left( \frac{\v_ x^2}{d_x}+  \frac{\v_ y^2}{d_y}\right)-2\frac{\v_x \v_ y}{\sqrt{d_xd_y}} \\
    &= \frac{1}{2|E|} \left[-(1 + 1) - 2 (1)\right] \\
     &= -\frac{2}{|E|}.
\end{align*}
\end{proof}

Looking at the non-edge between isolated twin vertices, we establish similar results to those of connected twins. 
\begin{thm}\label{twins non-adjacent}
Let $G$ be a graph with non-adjacent twins $x$ and $y$ with $|N(x)|=|N(y)|=d$. Then, $\lambda= 1$ is an eigenvalue of $\L_G$, and $\dlambda{\{x,y\}} \leq \frac{1}{kd}$, where $k$ is the multiplicity of $\lambda$. Furthermore, when $k=1$, we have  $\dlambda{\{x,y\}} = \frac{1}{d}$.
\end{thm}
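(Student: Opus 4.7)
The plan is to mirror the strategy of Theorem \ref{twins connected}, but using the eigenvector appropriate for non-adjacent twins. First I would exhibit an explicit eigenvector for $\lambda = 1$ coming from the twin structure: let $\v$ be the vector with $\v_x = 1/\sqrt{2}$, $\v_y = -1/\sqrt{2}$, and $\v_i = 0$ elsewhere. I would verify $\L \v = \v$ by direct computation of $(\L \v)_i = \v_i - \sum_{j \sim i} \v_j / \sqrt{d_i d_j}$. At $x$ and $y$ the sum vanishes because $x$ and $y$ are not adjacent and all other coordinates of $\v$ are zero, giving $(\L \v)_x = \v_x$ and $(\L \v)_y = \v_y$. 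For any common neighbor $z$ of $x$ and $y$ (which, since $x,y$ are twins with $N(x)=N(y)$, is the only way $\v$ contributes), the two contributions from $x$ and $y$ cancel because $\v_x = -\v_y$ and $\sqrt{d_x} = \sqrt{d_y} = \sqrt{d}$. At any other vertex, $(\L \v)_i = 0 = \v_i$. This establishes $\lambda = 1 \in \spec_{\L}(G)$.

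Next, in the simple case $k=1$, I would plug $\v$ into the single-eigenvector formula (\ref{the-other-formula}). Because $1 - \lambda = 0$, only the cross term survives, and it yields
\begin{align*}
    \dlambda{\{x,y\}} = -2 \cdot \frac{(1/\sqrt{2})(-1/\sqrt{2})}{\sqrt{d \cdot d}} = \frac{1}{d},
\end{align*}
giving the exact value claimed.

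For $k > 1$, I would extend $\v$ to an orthonormal basis $\{\v, \w_1, \ldots, \w_{k-1}\}$ of the $\lambda = 1$ eigenspace. The key structural observation, analogous to the one used in Theorem \ref{twins connected}, is that orthogonality to $\v$ forces $\w_{i,x}/\sqrt{2} - \w_{i,y}/\sqrt{2} = 0$, so $\w_{i,x} = \w_{i,y}$ for each $i$. Applying Lemma \ref{deriv definition} with $\lambda = 1$ kills every $(1-\lambda)$ term, leaving
\begin{align*}
    \dlambda{\{x,y\}} = \frac{1}{k}\left[\frac{1}{d} + \sum_{i=1}^{k-1}\left(-2\,\frac{\w_{i,x}\w_{i,y}}{d}\right)\right] = \frac{1}{k}\left[\frac{1}{d} - \frac{2}{d}\sum_{i=1}^{k-1}\w_{i,x}^2\right] \leq \frac{1}{kd},
\end{align*}
which is the desired inequality.

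The argument is essentially routine once the right eigenvector is identified. The only non-mechanical step is the verification that $\v$ is an eigenvector with eigenvalue $1$ — this is where the hypothesis that the twins are \emph{non-adjacent} (rather than connected, as in Theorem \ref{twins connected}) is essential, because the contribution from the edge $\{x,y\}$ would otherwise shift the eigenvalue to $(d+1)/d$. I do not anticipate any real obstacle; the cleanest exposition is to run the $k=1$ calculation first to capture the exact value, then bootstrap to the general multiplicity case using orthogonality with $\v$.
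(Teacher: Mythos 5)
Your proposal is correct and follows essentially the same route as the paper's proof: the same eigenvector $\v$ with $\v_x=1/\sqrt{2}$, $\v_y=-1/\sqrt{2}$, the same $k=1$ computation, and the same extension to an orthonormal basis using orthogonality to force $\w_{i,x}=\w_{i,y}$. The only difference is that you spell out the verification that $\v$ is an eigenvector, which the paper leaves as ``by computation.''
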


\begin{proof}
Let $G$ be a graph with non-adjacent twins $x$ and $y$ as described above. Consider a vector $\v$ where $\v_x=1/\sqrt{2}$, $\v_y=-1/\sqrt{2}$, and $\v_i=0$ for all other vertices $i$.  
By computation, $\v$ is an eigenvector for $\L_G$ with eigenvalue $\lambda = 1$. 

First, suppose $k=1$.  Then $\v$ is the only eigenvector, and
\begin{align*}
    \dlambda{\{x,y\}} & = (1-\lambda) \left(\frac{\v_x^2}{d}+ \frac{\v_y^2}{d}\right) -2 \frac{\v_x \v_y}{\sqrt{d^2}}\\
    & = 0 + \frac{1}{d}\\
    &= \frac{1}{d}.
\end{align*}

Now, let $k>1$. 
Since $\v$ is a normalized vector, we can extend it to an orthonormal basis of the eigenspace corresponding to $\lambda$.
Denote this basis as $\{\v, \w_1, \ldots, \w_{k-1}\}$. Moreover, for any vector $\w_i$ in this basis, the $x$-th and $y$-th entries are equal since $\w_i$ is orthogonal to $\v$. So $\w_{i,x}=\w_{i,y}$ for all $i$. Thus, 
\begin{align*}
    \dlambda{\{x,y\}} &= \frac{1}{k} \left[\frac{1}{d} + \sum_{i=1}^{k-1}\left(1-1\right)\left(\frac{\w_{i,x}^2}{d}+\frac{\w_{i,y}^2}{d}\right) -2 \frac{\w_{i,x} \w_{i,y}}{d} \right]\\
    &= \frac{1}{k} \left[\frac{1}{d} + \sum_{i=1}^{k-1}(0) - \left(\frac{2\w_{i,x}^2}{d}\right) \right] \\
    &\leq \frac{1}{k} \left( \frac{1}{d} \right)
\end{align*}
since the degree must always be positive. 
\end{proof}

The derivative of this particular eigenvalue resulting from isolated twins is maximized when $d=1$ and $k=1$ for an edge derivative of $\dlambda{\{x,y\}}=1$. For any pair of vertices in graphs on a small number of vertices, this was the largest edge derivative we found.

\begin{cor}\label{cor:twins iso}
Let $G$ be a graph with isolated twins $x,y$ with $|N(x)|=|N(y)|=d$. For all $\lambda \in \spec_{\L}(G)$, if $\lambda \neq 1$, then $\dlambda{\{x,y\}}\leq 0$. 
\end{cor}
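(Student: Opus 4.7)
The plan is to mirror the argument used for connected twins in Corollary \ref{cor: twins connected}, exploiting the eigenvector structure uncovered in the proof of Theorem \ref{twins non-adjacent}. There, it is shown that for isolated twins $x,y$ with common neighborhood of size $d$, the unit vector $\v$ with $\v_x = 1/\sqrt{2}$, $\v_y = -1/\sqrt{2}$, and zeros elsewhere is an eigenvector of $\L_G$ for the eigenvalue $1$. For any eigenvalue $\lambda \neq 1$, the corresponding eigenspace is orthogonal to $\v$, so every orthonormal basis $\{\w_1,\dots,\w_k\}$ of that eigenspace consists of vectors with $\langle \w_i, \v\rangle = (\w_{i,x} - \w_{i,y})/\sqrt{2} = 0$, i.e., $\w_{i,x} = \w_{i,y}$ for each $i$.

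Next, I would substitute $\w_{i,x} = \w_{i,y}$ and $d_x = d_y = d$ into Lemma \ref{deriv definition}. Each summand simplifies as
\begin{align*}
(1-\lambda)\left(\frac{\w_{i,x}^2}{d} + \frac{\w_{i,y}^2}{d}\right) - 2\frac{\w_{i,x}\w_{i,y}}{\sqrt{d_xd_y}}
&= (1-\lambda)\frac{2\w_{i,x}^2}{d} - \frac{2\w_{i,x}^2}{d} \\
&= -\frac{2\lambda\, \w_{i,x}^2}{d}.
\end{align*}
Averaging over the $k$ basis vectors then yields
\begin{align*}
\dlambda{\{x,y\}} = -\frac{2\lambda}{kd}\sum_{i=1}^k \w_{i,x}^2.
\end{align*}
Because the normalized Laplacian has nonnegative spectrum, $\lambda \geq 0$, and because each $\w_{i,x}^2 \geq 0$ and $d>0$, the right-hand side is at most zero, giving the claim.

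There is essentially no serious obstacle here: the only subtlety is confirming that the orthogonality of the eigenspaces for distinct eigenvalues of a real symmetric matrix really does force $\w_{i,x} = \w_{i,y}$, which is an immediate consequence of $\w_i \perp \v$ combined with $\v$ being supported on $\{x,y\}$ with equal-magnitude, opposite-sign entries. Everything else is a short algebraic collapse that parallels the connected-twin computation, and the final inequality follows simply from the nonnegativity of the spectrum of $\L$.
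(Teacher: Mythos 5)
Your proof is correct and follows essentially the same route as the paper: both use orthogonality to the $\pm 1/\sqrt{2}$ eigenvector supported on $\{x,y\}$ to conclude $\w_{i,x}=\w_{i,y}$ for every eigenvector of any $\lambda\neq 1$, then collapse the derivative formula to $-\lambda\cdot 2\w_{i,x}^2/d\leq 0$. Your writeup is in fact slightly more explicit than the paper's about why the orthogonality forces equal entries, but the argument is the same.
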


\begin{proof}
Consider the set of orthonormal eigenvectors constructed in the proof of Theorem \ref{twins non-adjacent}. It follows for all eigenvalues associated with eigenvectors $\w$, 
\begin{align*}
    \dlambda{\{x,y\}} &= \frac{1}{k} \left[ \sum_{i=1}^{k}\left(1-\lambda\right)\left(\frac{\w_{i,x}^2}{d}+\frac{\w_{i,y}^2}{d}\right) -2 \frac{\w_{i,x} \w_{i,y}}{d} \right]\\
    &= \frac{1}{k} \left[\sum_{i=1}^{k}-\lambda\left(\frac{2 \w_{i,x}^2}{d}\right) \right]\\
    & \leq 0,
\end{align*}
since $\lambda\geq0$.
\end{proof}
The vertices in each part of $K_{m,n}$ are sets of isolated twins. So, the result of Theorem \ref{bipartite-nonedge-samepart} is consistent with Theorem \ref{twins non-adjacent} and Corollary \ref{cor:twins iso}. 

\subsection{Bounds on Edge Derivative}\label{sec:bounds}

Looking generally over all graphs with respect to any edge or non-edge, we establish both an upper and lower bound on the edge derivative. 

\begin{thm} \label{lambda-bounds}
For a connected graph, the derivative of an eigenvalue of the normalized Laplacian $\L$ with respect to an edge $\{x,y\}$ is bounded by
\begin{align*}
    -\lambda \leq \dlambda{\{x,y\}} \leq 2-\lambda.
\end{align*} Moreover, these are tight bounds. 
\end{thm}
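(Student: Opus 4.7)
The plan is to reduce the derivative to a simple quadratic in two real scalars and then invoke unit-norm plus degree bounds. Starting from the formula of Lemma \ref{deriv definition}, I first treat the case of a simple eigenvalue $\lambda$ with unit eigenvector $\v$, and introduce the shorthand $a=\v_x/\sqrt{d_x}$ and $b=\v_y/\sqrt{d_y}$. The derivative then takes the clean form
\begin{align*}
    \dlambda{\{x,y\}} = (1-\lambda)(a^2+b^2) - 2ab,
\end{align*}
which I will rewrite in two equivalent ways by completing the square:
\begin{align*}
    (1-\lambda)(a^2+b^2)-2ab &= (2-\lambda)(a^2+b^2) - (a+b)^2,\\
    (1-\lambda)(a^2+b^2)-2ab &= -\lambda(a^2+b^2) + (a-b)^2.
\end{align*}
These identities immediately sandwich the derivative between $-\lambda(a^2+b^2)$ and $(2-\lambda)(a^2+b^2)$, since the extra square terms are nonnegative and the coefficients $\lambda$ and $2-\lambda$ are nonnegative thanks to $0\le\lambda\le2$ for the normalized Laplacian.

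The second step is to show $a^2+b^2\le 1$. Here I use that $G$ is connected, so $d_x,d_y\ge 1$, and that $\v$ is a unit vector, giving
\begin{align*}
    a^2+b^2 = \frac{\v_x^2}{d_x} + \frac{\v_y^2}{d_y} \le \v_x^2 + \v_y^2 \le \sum_{z\in V(G)} \v_z^2 = 1.
\end{align*}
Combining with the inequalities from the first step yields $-\lambda \le \dlambda{\{x,y\}} \le 2-\lambda$ in the simple-eigenvalue case. The extension to a non-simple eigenvalue is routine: by Lemma \ref{deriv definition} the derivative is the average over an orthonormal basis $\v_1,\dots,\v_k$ of the eigenspace, and each summand obeys the same two-sided bound by the argument above, so the average does too.

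For tightness I will exhibit $G=K_2$. Its normalized Laplacian has spectrum $\{0,2\}$. For the single edge $e$, Theorem \ref{lambda-zero} gives $\frac{d\lambda}{de}=0=-\lambda$ at $\lambda=0$, achieving the lower bound; and (by direct computation, or by Theorem \ref{lambda-two}) $\frac{d\lambda}{de}=0=2-\lambda$ at $\lambda=2$, achieving the upper bound. Hence both endpoints of the inequality are attained.

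I do not expect any genuine obstacle: the proof is essentially an algebraic repackaging plus the two estimates $a^2+b^2\le 1$ and $0\le\lambda\le 2$. The one place to be careful is ensuring that the degree bound $d_x,d_y\ge 1$ is genuinely used (it is what lets $a^2+b^2\le\v_x^2+\v_y^2$), and that the averaging step in the multiplicity-$k$ case preserves inequalities because each summand in the orthonormal basis is individually bounded.
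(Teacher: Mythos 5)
Your proposal is correct and follows essentially the same route as the paper: the same two completions of the square ($(2-\lambda)(a^2+b^2)-(a+b)^2$ and $-\lambda(a^2+b^2)+(a-b)^2$), the same estimate $\frac{\v_x^2}{d_x}+\frac{\v_y^2}{d_y}\le 1$ from unit norm and $d_x,d_y\ge 1$, and the same tightness witnesses at $\lambda=0$ and $\lambda=2$ (your $K_2$ is just a concrete instance of the paper's ``any connected graph'' / ``bipartite graph'' examples). You are in fact slightly more careful than the paper in flagging where $0\le\lambda\le 2$ and $d_x,d_y\ge 1$ are actually used.
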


\begin{proof}
We know from Formula \ref{L(G)} that the edge derivative is
\begin{align*}
    \dlambda{\{x,y\}} &= \frac1k \sum_{i=1}^k\left[
    (1-\lambda)\left( \frac{\v_{i,x}^2}{d_x} + \frac{\v_{i,y}^2}{d_y}\right)-2 \frac{\v_{i,x} \v_{i,y}}{\sqrt{d_xd_y}}
    \right]\\
     &= \frac1k \sum_{i=1}^k\left[\left(\frac{\v_{i,x}}{\sqrt{d_x}} - \frac{\v_{i,y}}{\sqrt{d_y}}\right)^2 - \lambda\left(\frac{\v_{i,x}^2}{d_x} + \frac{\v_{i,y}^2}{d_y}\right)\right].
\end{align*}
Looking at this rewritten equation for $\dlambda{\{x,y\}}$, it is clear that the first term in the sum, $$\left(\frac{\v_{i,x}}{\sqrt{d_x}} - \frac{\v_{i,y}}{\sqrt{d_y}}\right)^2,$$ is always nonnegative. 
We also know that $\v_{i,x}$ and $\v_{i,y}$ each come from an orthonormal eigenvector, so $$\left(\frac{\v_{i,x}^2}{d_x} + \frac{\v_{i,y}^2}{d_y}\right) \leq 1.$$ 
Thus, if we are trying to minimize $\dlambda{\{x,y\}}$, we get that
\begin{align*}
    \dlambda{\{x,y\}} &= \frac1k \sum_{i=1}^k\left[\left(\frac{\v_{i,x}}{\sqrt{d_x}} - \frac{\v_{i,y}}{\sqrt{d_y}}\right)^2 - \lambda\left(\frac{\v_{i,x}^2}{d_x} + \frac{\v_{i,y}^2}{d_y}\right)\right] \\
    & \geq \frac1k \sum_{i=1}^k \left[0 - \lambda \right] \\
    &= \left(\frac1k \cdot k\right)(-\lambda) \\
    &= -\lambda.
\end{align*}

We can also find an upper bound for $\dlambda{\{x,y\}}$. 
We rewrite $\dlambda{\{x,y\}}$ as 
\begin{align*}
    \dlambda{\{x,y\}} &= \frac1k \sum_{i=1}^k\left[
    (2-\lambda)\left(\frac{\v_{i,x}^2}{d_x} + \frac{\v_{i,y}^2}{d_y}\right) - \left(\frac{\v_{i,x}}{\sqrt{d_x}} + \frac{\v_{i,y}}{\sqrt{d_y}}\right)^2
    \right].
\end{align*}
 To maximize $\dlambda{\{x,y\}}$, because we know that $$\left(\frac{\v_{i,x}}{\sqrt{d_x}} + \frac{\v_{i,y}}{\sqrt{d_y}}\right)^2$$ is always nonnegative, we can underestimate this term as zero. 
Again, we have that $\v_{i,x}$ and $\v_{i,y}$ come from an orthonormal eigenvector, so the sum $$\left(\frac{\v_{i,x}^2}{d_x} + \frac{\v_{i,y}^2}{d_y}\right) \leq 1.$$
Thus,
\begin{align*}
    \dlambda{\{x,y\}} &= \frac1k \sum_{i=1}^k\left[
    (2-\lambda)\left(\frac{\v_{i,x}^2}{d_x} + \frac{\v_{i,y}^2}{d_y}\right) - \left(\frac{\v_{i,x}}{\sqrt{d_x}} + \frac{\v_{i,y}}{\sqrt{d_y}}\right)^2
    \right] \\
    & \leq \frac1k \sum_{i=1}^k\left[(2-\lambda) - 0\right] \\
    &= \left(\frac1k \cdot k\right)(2-\lambda) \\
    &= 2-\lambda.
\end{align*}

Putting these bounds together, we have
\begin{align*}
    -\lambda \leq \dlambda{\{x,y\}} \leq 2-\lambda.
\end{align*}
 Both of these bounds are tight. First, the lower bound is achieved when $\lambda=0$ for any connected graph $G$. 
 The upper bound is achieved when $\lambda=2$ for a bipartite graph. In both these cases, $\dlambda{e}=0$ for any edge $e$ by Theorems \ref{lambda-zero} and \ref{lambda-two}.
\end{proof}

The bounds are also tight when $G$ has a pair of non-isolated twins:
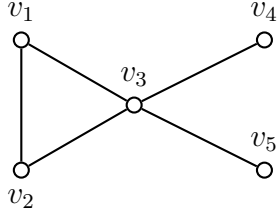
\begin{figure}[ht]
    \centering
    \begin{tikzpicture}
     \node[vertex, label=$v_3$] (v1) at ({0 + cos(0)},{sin(0)}) {};
     \node[vertex, label=$v_1$] (v2) at ({0 + cos(120)},{sin(120)}) {};
     \node[vertex, label=below:$v_2$] (v3) at ({0 + cos(240)},{sin(240)}) {};
     \node[vertex, label=$v_4$] (v4) at ({0 + cos(0) + abs(sin(120) - sin(240))}, {sin(120)}) {};
     \node[vertex, label=$v_5$] (v5) at ({0 + cos(0) + abs(sin(120) - sin(240))}, {sin(240)}) {};
     
     \draw[thick] (v5)--(v1)--(v2)--(v3)--(v1)--(v4);
    \end{tikzpicture}
    \caption{A graph $G$ and $\spec_{\L}(G)=\left\{0, 1, \frac{3}{2}, \frac{5 \pm \sqrt{5}}{4}\right\}$. It has adjacent twins $v_1,v_2$ and non-adjacent twins $v_4,v_5$.}
    \label{fig:fuzzy paw}
\end{figure}

Let us return to the graph from our first example shown again in Figure \ref{fig:fuzzy paw}.
First, observe that for this graph, the normalized Laplacian has eigenvalue $\lambda = 1$ with multiplicity $1$.
For the non-adjacent twin vertices (labeled $v_4$ and $v_5$ in the diagram), notice that both have degree $1$.
By Theorem \ref{twins non-adjacent}, 
$\dlambda{\{v_4,v_5\}} = \frac{1}{d} = 1$. 
This is the upper bound for $\dlambda{\{x,y\}}$ (as shown by Theorem \ref{lambda-bounds})
Therefore, this bound is indeed tight.

For reference, we have included again the edge derivative results which are separated into edges and non-edges:

\begin{center}
\begin{tabular}{cc}
     \begin{tabular}{|c||r|r|r|r|r|}
    \hline
    \diagbox{$\downarrow e$}{$\lambda \rightarrow$} & 0 & 1 & $\frac32$ & $\frac{5 + \sqrt{5}}{4}$ & $\frac{5 -\sqrt{5}}{4}$\\
    \hline
    \hline
    $\{v_1,v_2\}$ & 0 & 0 & $\phantom{-}\frac{1}{4}$& $-0.07$ & $-0.18$\\
     \hline
    $\{v_1,v_3\}$ & 0 & 0 & $-\frac{1}{8}$& $-0.02$ & 0.15\\
    \hline
    $\{v_2,v_3\}$ & 0 & 0 & $-\frac{1}{8}$& $-0.02$ & 0.15\\
    \hline
    $\{v_3,v_4\}$ & 0 & 0 & 0& 0.06 & $-0.06$\\
    \hline
    $\{v_3,v_5\}$ & 0 & 0 & 0& 0.06 & $-0.06$\\
    \hline
    \end{tabular} &  
     \begin{tabular}{|c||r|r|r|r|r|}
    \hline
    \diagbox{$\downarrow e$}{$\lambda \rightarrow$} & 0 & 1 & $\frac32$ & $\frac{5 + \sqrt{5}}{4}$ & $\frac{5 -\sqrt{5}}{4}$\\
    \hline
    \hline
    $\{v_1,v_4\}$ & 0 & 0 &$-\frac{1}{8}$& $-0.30$ & 0.43\\
    \hline
    $\{v_1,v_5\}$ & 0 & 0 & $-\frac{1}{8}$& $-0.30$ & 0.43\\
    \hline
    $\{v_2,v_4\}$ & 0 & 0 & $-\frac{1}{8}$& $-0.30$ & 0.43\\
    \hline
    $\{v_2,v_5\}$ & 0 & 0 & $-\frac{1}{8}$& $-0.30$ & 0.43\\
    \hline
    $\{v_4,v_5\}$ & 0 & 1 & 0& $-0.72$ & $-0.28$\\
    \hline
    \end{tabular}\\
    (a) Edges & (b) Non-Edges
\end{tabular}
\end{center}

Since $v_1$ and $v_2$ are connected twins, for $\lambda=\frac{3}{2}$, we have $\dlambda{\{v_1,v_2\}}=\frac{1}{4}$ by Theorem \ref{twins connected} and Corollary \ref{cor: twins connected}. Furthermore, for $\lambda\neq \frac{3}{2}$, we have $\dlambda{\{v_1,v_2\}}\leq 0$. 
Observe that for each eigenvalue in Table (a), the columns sum to zero (as shown by Theorem \ref{evals-sum-0}); likewise, observe that the rows sum to zero in both tables (as show by Theorem \ref{edge-sum-0}).

Not only is the bound tight for the graph in Figure \ref{fig:fuzzy paw}, but this type of structure gives the largest value of the edge derivative for graphs on small $n$. This leads us to the following conjecture. 

\begin{conj} {\label{conj:max_dir_dir}}
Over all graphs, the maximum edge derivative for any pair of vertices $x,y$ is 1; i.e.
\[\max_{G, \{x,y\}} \dlambda{\{x,y\}}=1.\]
\end{conj}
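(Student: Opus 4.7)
The plan is to prove Conjecture~\ref{conj:max_dir_dir} by establishing the uniform upper bound $\dlambda{\{x,y\}} \leq 1$ for every connected graph, every pair of vertices, and every eigenvalue of $\L$; the matching lower bound is already supplied by the discussion following Theorem~\ref{twins non-adjacent}, where an isolated pair of pendant twins ($d=k=1$) realizes $\dlambda{\{x,y\}} = 1$ at $\lambda = 1$.

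The starting point is to recast Lemma~\ref{deriv definition} in terms of the spectral projector $P_\lambda$ onto the $\lambda$-eigenspace. Writing $\alpha := (P_\lambda)_{xx}$, $\beta := (P_\lambda)_{yy}$, and $\gamma := (P_\lambda)_{xy}$, the formula collapses to
\begin{align*}
    \dlambda{\{x,y\}} = \frac{1}{k}\left[(1-\lambda)\left(\frac{\alpha}{d_x} + \frac{\beta}{d_y}\right) - \frac{2\gamma}{\sqrt{d_x d_y}}\right].
\end{align*}
Since $P_\lambda$ is an orthogonal projector, $\alpha,\beta \in [0,1]$, $|\gamma| \leq \sqrt{\alpha\beta}$, and $\alpha + \beta \leq \operatorname{tr}(P_\lambda) = k$. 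These are the only purely ``local'' constraints available, and they already suffice in one regime.

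The next step is a case split on $\lambda$. When $\lambda \geq 1$, the first summand is nonpositive, and bounding $-\gamma \leq \sqrt{\alpha\beta} \leq (\alpha+\beta)/2 \leq k/2$ together with $d_xd_y \geq 1$ immediately yields $\dlambda{\{x,y\}} \leq 1/\sqrt{d_x d_y} \leq 1$. When $0 < \lambda < 1$ (the case $\lambda=0$ is Theorem~\ref{lambda-zero}), both summands can be simultaneously positive and the local projector bounds only give $\dlambda \leq 2$. To close this gap I would invoke the global constraint that every eigenvector $\v_i$ for a nonzero eigenvalue is orthogonal to $D^{\frac12}\mathbbm{1}$: squaring $\sqrt{d_x}\,\v_{i,x} + \sqrt{d_y}\,\v_{i,y} = -\sum_{j \neq x,y}\sqrt{d_j}\,\v_{i,j}$ and applying Cauchy--Schwarz to the tail yields
\begin{align*}
    \bigl(\sqrt{d_x}\,\v_{i,x} + \sqrt{d_y}\,\v_{i,y}\bigr)^2 \leq \Bigl(\sum_{j \neq x,y} d_j\Bigr)\Bigl(1 - \v_{i,x}^2 - \v_{i,y}^2\Bigr),
\end{align*}
which I would sum over an orthonormal eigenbasis of the $\lambda$-eigenspace. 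The resulting inequality should force a trade-off between $(1-\lambda)(\alpha/d_x + \beta/d_y)$ and $-\gamma/\sqrt{d_x d_y}$ that keeps their weighted sum at most $k$.

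The main obstacle is the non-twin pendant regime: $d_x = d_y = 1$ with $x$ and $y$ having distinct unique neighbors. Here the local bounds alone allow the formula to reach $2$, so the orthogonality constraint has to do all the work. A clean execution will likely require iterating the eigenvector equation from $x, y$ outward (to propagate the $(1-\lambda)$ factor through the graph), or recasting the problem as a constrained optimization over projectors $P_\lambda$ with $P_\lambda^2 = P_\lambda$ and $P_\lambda D^{\frac12}\mathbbm{1} = 0$. I expect the extremal configurations to collapse to a twin-like structure, recovering Theorem~\ref{twins non-adjacent}, but rigorously ruling out ``approximately twin'' families that could sneak above $1$ will be the technical heart of the argument.
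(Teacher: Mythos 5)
The statement you are proving is stated in the paper only as Conjecture~\ref{conj:max_dir_dir}: the paper offers no proof, just the extremal example (isolated pendant twins, Theorem~\ref{twins non-adjacent} with $d=k=1$) and small-graph computations. Your proposal does not close it either, and the gap sits exactly where the conjecture has content. Your projector reformulation is correct, and your case $\lambda \geq 1$ does go through ($2|\gamma| \leq 2\sqrt{\alpha\beta} \leq \alpha+\beta \leq k$ and $d_xd_y \geq 1$ give $\dlambda{\{x,y\}} \leq 1$), but that regime is already covered by the paper's Theorem~\ref{lambda-bounds}, since for $\lambda \geq 1$ the bound $2-\lambda$ is at most $1$. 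The only genuinely open case is $0 < \lambda < 1$, and there your argument is a plan rather than a proof: you say the orthogonality constraint ``should force a trade-off'' and that ruling out approximately-twin families ``will be the technical heart of the argument,'' which is an accurate self-assessment of where the proof is missing.

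Moreover, the specific tool you propose for that case appears too weak as stated. The inequality $\bigl(\sqrt{d_x}\,\v_{i,x}+\sqrt{d_y}\,\v_{i,y}\bigr)^2 \leq \bigl(\sum_{j\neq x,y} d_j\bigr)\bigl(1-\v_{i,x}^2-\v_{i,y}^2\bigr)$ is correct, but the prefactor $\sum_{j\neq x,y} d_j = 2|E|-d_x-d_y$ grows with the graph, so unless $\v_{i,x}^2+\v_{i,y}^2$ is already very close to $1$ the right-hand side exceeds anything the left-hand side could be, and the constraint is vacuous precisely in the regime you need it. What actually controls the dangerous pendant configuration is the eigenvector equation itself: for a pendant vertex $x$ with neighbor $p$, $(\L\v)_x=\lambda\v_x$ forces $\v_p=\sqrt{d_p}\,(1-\lambda)\v_x$, so for $\lambda$ bounded away from $1$ the eigenvector cannot concentrate on $\{x,y\}$ --- this is the propagation argument you gesture at but do not execute, and making it quantitative for all $0<\lambda<1$ (and for non-pendant $x,y$) is the entire difficulty. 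Until that is done, your proposal does not improve on the paper's bound of $2-\lambda$ in the interval $0<\lambda<1$, and the conjecture remains open.
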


\section{Edge Derivative of Kemeny's Constant}

Using our formula for the edge derivative of the eigenvalue, we can also obtain a formula for the edge derivative of Kemeny's constant.
This derivative can be used to analyze how the value of Kemeny's constant for a graph changes as one or more of its edge weights are changed. 
Since Kemeny's constant is a measure of the connectivity of a graph, this derivative offers information about how changes in the graph structure affect connectivity.

Recall that Kemeny's constant is calculated in terms of the eigenvalues of $\L$ by
\begin{align*}
    \K(G) = \sum_{\lambda \neq 0} \frac{1}{\lambda}.
\end{align*}
As with the edge derivative of an eigenvalue, $\K$ can be differentiated with respect to a changing edge $\{x, y\}$ in $G$.
Thus, 
\begin{align*}
    \dk{\{x,y\}} = \sum_{\lambda \neq 0} -\frac{1}{\lambda^2}\left(\dlambda{\{x,y\}}\right).
\end{align*}

Here, a positive value of $\dk{\{x,y\}}$ indicates that increasing the weight of edge $\{x, y\}$ causes random walks between vertices to take longer and the graph to be \emph{less} connected. 
On the other hand, a negative value of $\dk{\{x,y\}}$ indicates that increasing the weight of $\{x, y\}$ causes random walks between vertices to get shorter and the graph to become \emph{more} connected.

One might be tempted to assume that adding a non-edge to a graph always results in a negative value of $\dk{\{x,y\}}$ (and increases connectivity). 
This is not the case! 
Take, for example, an ``almost'' barbell graph, i.e., a graph formed by connecting a clique and a clique minus an edge via 
a path (see Figure \ref{fig:almost_barbell}).
Taking the derivative of Kemeny's constant with respect to the non-edge in the clique can in fact yield a positive value. That is, adding this edge will make the graph less connected. 

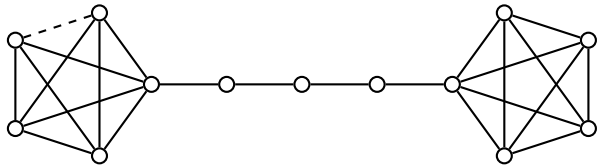
\begin{figure}[ht]
    \centering
                    \begin{tikzpicture}
                        \begin{scope}[xshift=-3cm]
                            \foreach \i in {1,...,5} {
                                \node[vertex] (a\i) at (\i*360/5:1) {};
                             }
                             
                            \foreach \i in {2,...,5} {
                                \foreach \j in {\i,...,5} {
                                    \ifnum\i=\j\relax\else
                                    \draw[thick] (a\i)--(a\j);
                                    \fi
                                }
                            }
                            
                            \foreach \j in {3,...,5} {
                                \draw[thick] (a1)--(a\j);
                            }
                            
                            \draw[thick, dashed] (a1)--(a2);
                            
                        \end{scope}
                        
                        \begin{scope}[xshift=3cm]
                            \foreach \i in {1,...,5} {
                                \node[vertex] (b\i) at (\i*360/5+180:1) {};
                             }
                             
                            \foreach \i in {1,...,5} {
                                \foreach \j in {\i,...,5} {
                                    \ifnum\i=\j\relax\else
                                    \draw[thick] (b\i)--(b\j);
                                    \fi
                                }
                            }
                        \end{scope}
                        
                        \node[vertex] (p1) at (-1,0) {};
                        \node[vertex] (p2) at (0,0) {};
                        \node[vertex] (p3) at (1,0) {};
                        
                        \draw[thick] (a5)--(p1)--(p2)--(p3)--(b5);
                    \end{tikzpicture}
                \
    \caption{An almost barbell becoming a true barbell}
    \label{fig:almost_barbell}
\end{figure}

Similar to the edge derivative of an eigenvalue, we can look at how Kemeny's constant changes if we parameterize every edge. 

\begin{thm} \label{kprime}
The edge derivatives of Kemeny's constant with respect to each edge in a graph $G$ sum to zero.
\end{thm}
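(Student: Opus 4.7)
The plan is to reduce the claim to the analogous statement already established at the eigenvalue level, namely Theorem \ref{evals-sum-0}, which says that for every eigenvalue $\lambda$ of $\L$ the sum $\sum_{\{x,y\}\in E} \frac{d\lambda}{d\{x,y\}}$ is zero. Since Kemeny's constant is expressed as a finite sum $\K(G)=\sum_{\lambda\neq 0} 1/\lambda$, the edge derivative of $\K$ is a linear combination (with coefficients $-1/\lambda^2$) of edge derivatives of eigenvalues, so the result ought to follow by a swap of finite sums.

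Concretely, I would begin by writing
\[
\sum_{\{x,y\}\in E} \frac{d\K}{d\{x,y\}} \;=\; \sum_{\{x,y\}\in E} \sum_{\lambda\neq 0} \left(-\frac{1}{\lambda^2}\right) \frac{d\lambda}{d\{x,y\}}.
\]
Both sums are finite, so Fubini-style interchange is immediate. Interchanging gives
\[
\sum_{\lambda\neq 0} \left(-\frac{1}{\lambda^2}\right) \sum_{\{x,y\}\in E} \frac{d\lambda}{d\{x,y\}},
\]
and then Theorem \ref{evals-sum-0} kills each inner sum, leaving zero. This is essentially the entire argument.

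A small technical point worth addressing (rather than an obstacle) is the status of eigenvalues with multiplicity: the formula for $d\K/d\{x,y\}$ as written distinguishes only between $\lambda=0$ and $\lambda\neq 0$. By Theorem \ref{lambda-zero}, the zero eigenvalue contributes nothing to either side, so restricting to $\lambda\neq 0$ is harmless. For repeated nonzero eigenvalues one uses Lemma \ref{deriv definition} (the averaged derivative over the eigenspace), and since Theorem \ref{evals-sum-0} is stated in terms of this eigenspace derivative as well, the interchange argument goes through without change.

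Finally, I would close by noting the conceptual explanation, which mirrors the remark following Theorem \ref{evals-sum-0}: summing the edge derivative of $\K$ over all edges amounts to perturbing every edge weight uniformly by $t$, i.e.\ rescaling the adjacency matrix by $1+t$. The normalized Laplacian $\L = I - D^{-1/2}AD^{-1/2}$ is invariant under such uniform scaling, hence its spectrum, and consequently $\K$, is constant along this direction. The derivative in that direction is therefore zero, matching our computation.
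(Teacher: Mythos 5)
Your proposal is correct and follows essentially the same route as the paper: write $\sum_{\{x,y\}\in E} \frac{d\K}{d\{x,y\}}$ as a double sum, interchange the (finite) sums, and apply Theorem \ref{evals-sum-0} to annihilate each inner sum. The additional remarks on multiplicity and the scaling-invariance interpretation are consistent with the paper's surrounding discussion but are not needed beyond what the paper's own proof already does.
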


\begin{proof}
We begin by writing the sum of $\dk{\{x, y\}}$ over each edge $\{x, y\}$ as
\begin{align*}
    \sum_{\{x,y\} \in E} \dk{\{x,y\}} &=\sum_{\{x,y\} \in E} \left(\sum_{\lambda \neq 0} -\frac{\dlambda{\{x,y\}}}{\lambda^2} \right).
\end{align*}
Switching the order of summation, we find
\begin{align*}
    \sum_{\{x,y\} \in E} \dk{\{x,y\}} &= \sum_{\lambda \neq 0}\left(\sum_{\{x,y\} \in E} -\frac{\dlambda{\{x,y\}}}{\lambda^2}\right) \\
    &= \sum_{\lambda \neq 0} \left[ -\frac{1}{\lambda^2} \left(\sum_{\{x,y\} \in E} \dlambda{\{x,y\}}\right)\right].
\end{align*}
From Lemma \ref{evals-sum-0}, we know that $\sum \dlambda{\{x,y\}} = 0$. Thus,
\begin{align*}
    \sum_{\{x,y\} \in E} \dk{\{x,y\}} &= \sum_{\lambda \neq 0} -\frac{1}{\lambda^2} \cdot 0 \\
    &= 0.
\end{align*}
\end{proof}

\subsection{Families of Graphs}

As with the edge derivative of the eigenvalue, we can find results for the edge derivative of Kemeny's constant for different families of graphs. The proofs for $\dk{\{x,y\}}$ follow nicely and without difficulty from those for $\dlambda{\{x,y\}}$.

\begin{thm}\label{k-prime-complete-biparite-cycle}
Let $G$ be an edge transitive graph. Let $\K$ be Kemeny's constant for $G$ and $\{x, y\}$ be an edge of $G$. Then, $\dk{\{x, y\}} = 0$.
\end{thm}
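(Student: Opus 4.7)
The plan is to reduce the claim immediately to the eigenvalue statement already established in Theorem \ref{edge-trans}. Recall that Kemeny's constant is expressed in terms of the nonzero spectrum of the normalized Laplacian, and accordingly its edge derivative was just written as
\begin{align*}
    \dk{\{x,y\}} = -\sum_{\lambda \neq 0} \frac{1}{\lambda^2}\,\dlambda{\{x,y\}}.
\end{align*}
So the whole content of the theorem is to control the factors $\dlambda{\{x,y\}}$ that appear on the right.

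First I would note that $G$ being edge transitive means precisely that any two edges are equivalent under a graph automorphism, which is exactly the hypothesis of Theorem \ref{edge-trans}. That theorem already gives us $\dlambda{\{x,y\}} = 0$ for \emph{every} eigenvalue $\lambda$ of $\L_G$ and every edge $\{x,y\}$. Plugging this into the displayed formula above forces every summand to vanish.

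Therefore the conclusion $\dk{\{x,y\}} = 0$ follows in one line after unfolding the definition. There is no real obstacle here: the only thing to be careful about is that the formula for $\dk{\{x,y\}}$ genuinely decomposes as a sum of eigenvalue-derivative contributions (which it does, by linearity of differentiation applied to the finite sum defining $\K$), and that Theorem \ref{edge-trans} applies uniformly to \emph{all} eigenvalues $\lambda \neq 0$ — not just simple ones — which is in fact how it was stated. Given that, the proof is a direct substitution and no families-specific analysis is needed.
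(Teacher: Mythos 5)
Your proposal is correct and follows exactly the same route as the paper: invoke Theorem \ref{edge-trans} to get $\dlambda{\{x,y\}}=0$ for every eigenvalue, then substitute into the sum defining $\dk{\{x,y\}}$. Nothing is missing.
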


\begin{proof}
By Theorem \ref{edge-trans}, we have that $\dlambda{\{x, y\}} = 0$ for every eigenvalue $\lambda$ of $G$. Therefore, \[\dk{\{x, y\}} = \sum_{\lambda\ne 0} -\frac{1}{\lambda^2}\left(\dlambda{\{x, y\}}\right) = \sum_{\lambda \ne 0} -\frac{1}{\lambda^2} \cdot 0 = 0,\] as desired.
\end{proof}

\subsection{Bounds and Extreme Values}

Computing the edge derivative of Kemeny's constant with respect to a changing edge allows us to determine exactly which edges in a graph have the greatest impact on overall graph connectivity when their weights are modified.

With this derivative explicitly defined, we can find the bounds on $\dk{\{x,y\}}$ for some edge or non-edge $\{x,y\}$. 
\begin{thm} \label{kbounds}
The edge derivative of Kemeny's constant, $\dk{\{x,y\}}$, is bounded as follows:
\begin{align}
    \sum_{\lambda \neq 0} \left(\frac{-2}{\lambda^2}\right) + \K \leq \dk{\{x,y\}} \leq \K.
\end{align}
\end{thm}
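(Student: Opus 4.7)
The plan is to combine the formula
\[\dk{\{x,y\}} = \sum_{\lambda \neq 0} -\frac{1}{\lambda^2}\dlambda{\{x,y\}}\]
with the per-eigenvalue bounds $-\lambda \leq \dlambda{\{x,y\}} \leq 2-\lambda$ from Theorem \ref{lambda-bounds}, and then sum termwise. Every nonzero eigenvalue of $\L$ is positive, so $\frac{1}{\lambda^2}>0$, which means multiplying the bounds by $-\frac{1}{\lambda^2}$ flips the inequalities in a controlled way.

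Concretely, I would first fix an arbitrary nonzero $\lambda$ and apply Theorem \ref{lambda-bounds} to get
\[\frac{1}{\lambda} - \frac{2}{\lambda^2} \;=\; -\frac{1}{\lambda^2}(2-\lambda) \;\leq\; -\frac{1}{\lambda^2}\dlambda{\{x,y\}} \;\leq\; -\frac{1}{\lambda^2}(-\lambda) \;=\; \frac{1}{\lambda}.\]
Next, I would sum this two-sided inequality over all $\lambda\in\spec_{\L}(G)\setminus\{0\}$. The right-hand side collapses to $\sum_{\lambda\neq 0}\frac{1}{\lambda} = \K(G)$ by the Levene--Loizou expression for Kemeny's constant recalled in the introduction, while the left-hand side becomes $\K(G) + \sum_{\lambda\neq 0}\bigl(-\tfrac{2}{\lambda^2}\bigr)$. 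Re-assembling the middle expression as $\dk{\{x,y\}}$ yields exactly the stated bounds.

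One subtlety worth flagging: Theorem \ref{lambda-bounds} is stated for an edge, but the same proof (based on the algebraic rewriting of the expression in Lemma \ref{deriv definition}) goes through verbatim for a non-edge, so the argument covers both cases advertised in the statement. The only ``obstacle,'' if one can call it that, is bookkeeping the signs when distributing $-\frac{1}{\lambda^2}$ across the two-sided inequality; no new identities about eigenvectors or about the graph structure are required, so the proof should be short. If desired, one could also briefly note tightness: whenever every $\dlambda{\{x,y\}}=0$ (e.g.\ edge-transitive graphs, by Theorem \ref{edge-trans}), the derivative $\dk{\{x,y\}}=0$, which does not in general meet either bound, so the extremal cases are not automatically attained and a separate discussion would be needed for sharpness.
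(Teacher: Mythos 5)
Your proposal is correct and follows essentially the same route as the paper: substitute the per-eigenvalue bounds from Theorem \ref{lambda-bounds} into the eigenvalue-sum formula for $\dk{\{x,y\}}$, flip the inequalities when multiplying by $-\frac{1}{\lambda^2}$, and identify $\sum_{\lambda\neq 0}\frac{1}{\lambda}$ with $\K$. Your added remarks on the non-edge case and on tightness go slightly beyond what the paper writes, but the core argument is identical.
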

\begin{proof}

We can easily find these bounds by substituting our bounds for $\dlambda{\{x,y\}}$, found in Theorem \ref{lambda-bounds}, into the formula for $\dk{\{x,y\}}$.
Doing this for both the upper and lower bounds of $\dk{\{x,y\}}$, we get that
\begin{align*}
    \sum_{\lambda \neq 0} -\frac{1}{\lambda^2}(2-\lambda) \leq &\dk{\{x,y\}} \leq \sum_{\lambda \neq 0} -\frac{1}{\lambda^2}(-\lambda) \\
    \sum_{\lambda \neq 0} \frac{-2}{\lambda^2} + \sum_{\lambda \neq 0} \frac{\lambda}{\lambda^2} \leq &\dk{\{x,y\}} \leq \sum_{\lambda \neq 0} \frac{\lambda}{\lambda^2} \\
    \sum_{\lambda \neq 0} \frac{-2}{\lambda^2} + \sum_{\lambda \neq 0} \frac{1}{\lambda} \leq &\dk{\{x,y\}} \leq \sum_{\lambda \neq 0} \frac{1}{\lambda}.
\end{align*}

Recall that Kemeny's constant is found by $\K = \sum_{\lambda \neq 0} \frac{1}{\lambda}$. 
Thus, we have the bounds 
\begin{align*}
    \sum_{\lambda \neq 0} \left(\frac{-2}{\lambda^2}\right) + \K \leq \dk{\{x,y\}} \leq \K.
\end{align*}
\end{proof}

Despite the bound for the edge derivative of eigenvalue being tight for certain graphs, it is unknown if either bound for the edge derivative of Kemeny's constant is tight. 
Instead, we present two families of graphs that have the highest and lowest edge derivatives of Kemeny's constant for graphs on up to $7$ vertices and compare these values to the respective bounds.

\subsubsection*{Maximum Value (Lollipop Graphs)}

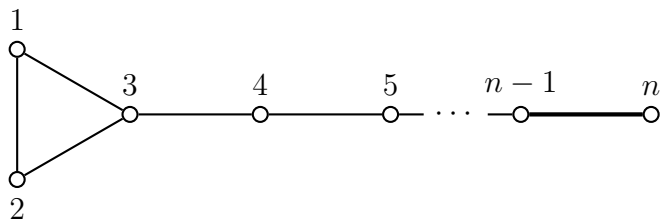
\begin{figure}[ht]
    \centering
        \begin{tikzpicture}[scale=1]
        
            \node[vertex, label=3] (v1) at ({0 + cos(0)},{sin(0)}) {};
            \node[vertex, label=1 ] (v2) at ({0 + cos(120)},{sin(120)}) {};
            \node[vertex, label=below:2] (v3) at ({0 + cos(240)},{sin(240)}) {};
            \node[vertex, label=4] (v4) at ({0 + cos(0) + abs(sin(120) - sin(240))}, 0) {};
            \node[vertex, label=5] (v5) at ({0 + cos(0) + 2 * abs(sin(120) - sin(240))}, 0) {};
            \node[] (dotdotdot) at ({0 + cos(0) + 2.5 * abs(sin(120) - sin(240))}, 0) {$\cdots$};
            \node[vertex, label=$n-1$] (vnminus1) at ({0 + cos(0) + 3 * abs(sin(120) - sin(240))}, 0) {};
            \node[vertex, label=$n$] (vn) at ({0 + cos(0) + 4 * abs(sin(120) - sin(240))}, 0) {};
             
            \draw[thick] (v1)--(v2)--(v3)--(v1);
            \draw[thick] (v1)--(v4)--(v5)--(dotdotdot)--(vnminus1)--(vn);
            \draw[ultra thick] (vnminus1)--(vn);
         
        \end{tikzpicture}
    \caption{The lollipop graph $L_{3,n-3}$ with far edge $\{n-1, n\}$}
    \label{fig:extendo}
\end{figure}

For graphs on $n\leq 7$ vertices, the maximum value of the derivative of Kemeny's constant occurs on the lollipop graph consisting of a complete graph $K_3$ attached by a bridge to a path graph on $n - 3$ vertices (see Figure \ref{fig:extendo}) with respect to the edge at the far end of the path (i.e., the edge with greatest distance from the $K_3$ clique).

\begin{table}[ht]
    \centering
    \begin{tabular}{c|cccccccc}
        $n$ & 5 & 6 & 7 & 8 & 9 & 10 & 11 & 12 \\
        \hline
        Graph & $L_{3, 2}$ & $L_{3, 3}$ & $L_{3, 4}$ & $L_{4, 4}$ & $L_{4, 5}$ & $L_{5, 5}$ & $L_{5, 6}$ & $L_{5, 7}$ \\
        $\dk{\{n-1,n\}}$ & 1.06 & 1.86 & 2.58 & 3.51 & 4.38 & 5.28 & 6.28 & 7.22\\
        Upper Bound & 4.9 & 8.1 & 12.1 & 15.0 & 20.8 & 23.8 & 31.5 & 40.1
    \end{tabular}
    \caption{The maximum edge derivative of $\K$ among lollipop graphs on $n$ vertices with respect to the far edge, as well as the upper bound for that graph as generated by Theorem \ref{kbounds}}
    \label{fig:max-dk}
\end{table}

One interpretation for this graph and edge giving a large value for the edge derivative of $\mathcal{K}$ comes from the characteristics of the barbell graph (see Figure \ref{fig:barbell}). 
It is established that the barbell graph has the highest order of Kemeny's constant at $\mathcal{O}(n^3)$ (see \cite{BBDDLQR}). This high value for $\mathcal{K}$ is understood as a result of the two cliques acting as a sink in a random walk, increasing the mean first passage time.

The high value for the edge derivative of $\mathcal{K}$ on the lollipop graph can be interpreted similarly. 
By increasing the weight of the end of the path, the graph is in a way ``barbellized,'' where the far edge serves the same purpose as a clique in the barbell graph.

In general, the lollipop graph $L_{r, s}$ refers to a complete graph $K_r$ connected to a path graph on $s$ vertices by a bridge.
Table \ref{fig:max-dk} lists the lollipop graph on $n$ vertices (for $5 \leq n \leq 12$) that yields the highest edge derivative of $\K$ with respect to its far edge (labeled $\{n - 1, n\}$), the computed value for this derivative, and the upper bound generated by Theorem \ref{kbounds}. 

The general extremal graph for the upper bound of the edge derivative of Kemeny's constant is likely to be related to the lollipop graph. 
As seen in Table \ref{fig:max-dk}, the value of $\dk{\{n-1,n\}}$ with respect to the far edge of the lollipop grows linearly. Theorem \ref{kbounds} shows that $\dk{\{n-1,n\}}$ is bounded by Kemeny's constant, which itself is bounded by $\mathcal{O}(n^3)$ \cite{ACTT18}.
This is far from the linear growth shown in Table \ref{fig:max-dk}, which leads us to the following conjecture.

\begin{conj} {\label{conj:max_K_dir}}
For a given $n$, the edge derivative of Kemeny's constant on simple graphs of order $n$ is largest on a lollipop graph on $n$ vertices with respect to the far edge. 
The value of this derivative is $\mathcal{O}(n)$.
\end{conj}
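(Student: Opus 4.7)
The plan is to prove Conjecture \ref{conj:max_K_dir} by combining a closed-form expression for $\dk{\{x,y\}}$ in terms of effective resistances (equivalently, mean first-passage times) with a direct computation on the lollipop family. The eigenvalue bound of Theorem \ref{kbounds} is too loose because it is phrased in terms of $\K$ itself and so is off by a factor of $n^2$; one instead needs a combinatorial identity that exposes how much of the derivative is actually concentrated in the small eigenvalues. The natural starting point is a resistance-distance identity for Kemeny's constant of the Palacios--Renom type, $\K(G) = f(G)\sum_{i<j} d_i d_j R_{ij}$, where $R_{ij}$ is the effective resistance between vertices $i$ and $j$. Differentiating with respect to the weight of $\{x,y\}$, and invoking the classical rule $\partial R_{ij}/\partial c_e = -I_{ij}(e)^2$ (where $I_{ij}(e)$ is the unit current through $e$ for injection at $i$ and withdrawal at $j$), should express $\dk{\{x,y\}}$ as a degree-weighted sum of squared currents with a correction from the $1/|E|$-type prefactor.

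With that identity in hand, the first computational step is to evaluate $\dk{\{n-1,n\}}$ for $L_{r,s}$. Because $\{n-1,n\}$ is a cut edge, $I_{ij}(\{n-1,n\})$ equals $\pm 1$ when exactly one of $i,j$ is the pendant $n$ and vanishes otherwise. This collapses the sum to an aggregate term involving $d_n=1$, the degrees on the non-pendant side of the cut, and the resistances $R_{n,j}$, each bounded by the graph diameter. Since the diameter is $O(n)$ and the degrees are bounded, the calculation yields $\dk{\{n-1,n\}}=\Theta(n)$, matching the numerics in Table \ref{fig:max-dk} and identifying the leading constant.

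The second step is the matching $O(n)$ upper bound over all graphs. I would split the squared-current sum into contributions from cut and non-cut edges. For a non-cut edge, energy conservation $\sum_{e} c_e I_{ij}(e)^2 = R_{ij}$ forces $I_{ij}(\{x,y\})^2 \le R_{ij}$, so weighting by $d_id_j$ and telescoping against the global correction produces cancellations that leave an $O(n)$ remainder. For a cut edge, the nonzero contributions come only from pairs straddling the cut, and the total is controlled by the size of the smaller component, which is trivially $O(n)$. Theorem \ref{kprime} provides a sanity check: the sum of all $\dk{e}$ is zero, so a concentrated positive value on one edge must be balanced by negative values distributed across many edges, consistent with linear rather than superlinear growth.

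The \textbf{main obstacle} will be the extremality claim: ruling out every non-lollipop graph on $n$ vertices. I expect the difficulty is in showing that local perturbations of a lollipop—adding a chord, shortening the tail, or enlarging the clique—strictly decrease $\dk{\{n-1,n\}}$. The plan is a local-swap argument using the current-flow formula, combined with the heuristic that a large single-edge derivative simultaneously requires a pendant tail (to localize the stationary mass on the far side of the cut) and a dense body (to act as a random-walk sink), which together force the lollipop shape. A secondary obstacle is reconciling the resistance-distance derivation with the normalized-Laplacian eigenvalue formalism used throughout the paper; I would verify consistency on small cases via $\dk{\{x,y\}}=-\sum_{\lambda\neq 0}\dlambda{\{x,y\}}/\lambda^2$ before attempting the global extremal argument.
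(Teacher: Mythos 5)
First, note that the statement you are addressing is a \emph{conjecture}: the paper offers no proof of it, only exhaustive computation for $n\le 7$ and lollipop data up to $n=12$. So there is nothing in the paper to compare your argument against; your proposal has to stand on its own, and as written it is a research plan with two essential gaps rather than a proof. Your starting identity is sound --- $\K(G)=\frac{1}{2|E|}\sum_{i<j}d_id_jR_{ij}$ follows from the commute-time identity, and $\partial R_{ij}/\partial c_e=-I_{ij}(e)^2$ for unit conductances is classical --- and the cut-edge computation on $L_{r,s}$ plausibly yields $\Theta(n)$ (though be careful: the squared-current term collapses to $d_n(2|E|-d_n)/(2|E|)=O(1)$ after normalization, and the $\Theta(n)$ contribution actually comes from the derivatives of $d_{n-1}$, $d_n$, and $|E|$, i.e.\ from $\frac{1}{2|E|}\sum_j d_j(R_{n,j}+R_{n-1,j})$; also the clique degrees are \emph{not} bounded, the bound survives only because $\sum_j d_j$ cancels against the prefactor). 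That establishes at best the $\Omega(n)$ half for the lollipop.

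The two gaps that would remain are exactly the content of the conjecture. (1) The global $O(n)$ upper bound: your claim that ``telescoping against the global correction produces cancellations that leave an $O(n)$ remainder'' is not an argument. The raw sum $\frac{1}{2|E|}\sum_{i<j}d_id_jR_{ij}$ is $\Theta(n^3)$ for barbell-like graphs, and its derivative contains competing terms of that order (the $-\K/|E|$ term from differentiating the prefactor, the degree-derivative terms, and the squared-current term); you would need to exhibit the cancellation explicitly and uniformly over all graphs and all pairs $\{x,y\}$, including non-edges, and nothing in energy conservation alone forces the residue to be linear. Note also that Theorem \ref{kprime} does not help here: the zero-sum property constrains the average over edges, not the maximum, and in any case says nothing about non-edges. (2) The extremality claim --- that the lollipop with its far edge beats every other graph and pair --- is acknowledged by you as the main obstacle, and the ``local-swap plus heuristic'' sketch is not a proof strategy so much as a restatement of the difficulty. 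Until (1) and (2) are supplied, the proposal does not resolve the conjecture; it is a reasonable reformulation of the problem in resistance-distance language, which may well be the right coordinate system, but the conjecture remains open under it.
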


\subsubsection*{Minimum Value (Path to Cycle)}

\begin{figure}[ht]
    \centering
    \begin{tikzpicture}[scale=0.7]
            
                \pgfmathtruncatemacro{\dx}{1.4 * abs(sin(120) - sin(240))}
            
                \node[vertex, label=1] (v1) at (0, 0) {};
                \node[vertex, label=2] (v2) at (\dx, 0) {};
                \node[vertex, label=3] (v3) at (2 * \dx, 0) {};
                \node[] (dotdotdot) at (2.5 * \dx, 0) {$\cdots$};
                \node[vertex, label=$n-1$] (vnminus1) at (3 * \dx, 0) {};
                \node[vertex, label=$n$] (vn) at (4 * \dx, 0) {};
                 
                \draw[thick] (v1)--(v2)--(v3)--(dotdotdot)--(vnminus1)--(vn);
                
                \draw[ultra thick,  dashed] (vn) to[out=-135, in=-45] (v1);
             
            \end{tikzpicture}
    \caption{Adding a non-edge to turn a path on $n$ vertices into a cycle}
    \label{fig:path-to-cycle}
\end{figure}
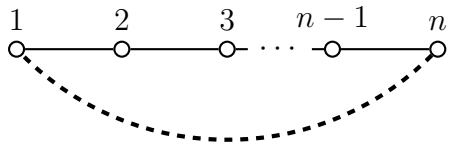

For graphs on $n\leq 7$ vertices, the minimum value of the edge derivative of Kemeny's constant occurs on the path graph with respect to the \textit{non-edge} that forms a cycle graph when added (see Figure \ref{fig:path-to-cycle}). 
Especially for larger values of $n$, adding this edge has a dramatic effect on Kemeny's constant, as shown in Table \ref{fig:min-dk}. Compared to the maximal value, this derivative's magnitude is much larger.

\begin{table}[ht]
    \centering
    \begin{tabular}{c|cccccccc}
        $n$ & 5 & 6 & 7 & 8 & 9 & 10 & 11 & 12\\
        \hline
        $\dk{\{n,1\}}$ & $-9.4$ & $-19.2$ & $-34.0$ & $-54.9$ & $-82.7$ & $-118.5$ & $-163.4$ & $-218.2$\\
        Lower Bound & $-21$ & $-52$ &$-111.2$ & $-208$ & $-357$ & $-574.2$ & $-877.8$ & $-1288$\\
    \end{tabular}
    \caption{The edge derivative of $\K$ with respect to the path-to-cycle non-edge on $n$ vertices, as well as the lower bound for that graph as generated by Theorem \ref{kbounds}}
    \label{fig:min-dk}
\end{table}

Again, this behavior can be described by considering the connectivity of the changing edge. 
The cycle graph $C_n$ is much more connected than the path graph $P_n$, since the maximum distance between any two points in a cycle $C_n$ is $\lfloor n/2 \rfloor$, while the maximum distance between any two points in a path $P_n$ is $n - 1$.
We would then expect a random walk between two arbitrary vertices to be much shorter on average in $C_n$ than in $P_n$.
The computed values of the edge derivative confirm this expectation.

\begin{table}[ht]
    \centering
    \begin{tabular}{c|cccccccc}
        $e$ & $\{1,8\}$ & $\{1,9\}$ & $\{1,10\}$ & $\{1,11\}$ & $\{1,12\}$ & $\{1,13\}$ & $\{1,14\}$ & $\{1,15\}$\\
        \hline
        $\dk{e}$ & $-139.4$ & $-191.8$ & $-248.6$ & $-306.4$ & $-361.1$ & $-407.8$ & $-440.6$ & $-452.7$\\
        
        $\Delta \K$ & $-16.3$ & $-20.4$ &$-24.0$ & $-27.0$ & $-29.1$ & $-30.1$ & $-29.9$ & $-28.2$\\
    \end{tabular}
    \caption{The edge derivative of $\K$ with of a specified non-edge for the path on $15$ vertices, as well as the difference of Kemeny's constant for the two graphs.}
    \label{fig:eight-ninths}
\end{table}

Interestingly, our results differ from the greatest decrease of Kemeny's constant for a single edge change. The greatest decrease appears when adding an edge from a path end to about $\frac 89$ along the path creating a cycle with a pendant path (see \cite{KZ16}). As shown in Table \ref{fig:eight-ninths}, we found the smallest edge derivative for Kemeny's constant to occur when adding an edge between two path ends.

For small $n$, this ``path to cycle'' example is the extremal example. The extremal example for larger $n$ is likely related. 
As seen in Table \ref{fig:min-dk}, the value for $\dk{\{n,1\}}$ on the path grows far slower than the growth of the bound from Theorem \ref{kbounds}, which is $\mathcal{O}(n^6)$ (since the eigenvalues are $\mathcal{O}(n^3)$). This leads us to to the following conjecture.

\begin{conj} {\label{conj:min_K_dir}}
For a given $n$, the derivative of Kemeny's constant on simple graphs of order $n$ is smallest on the path on $n$ vertices with respect to the non-edge connecting the ends of the path. The value of this derivative is $\mathcal{O}(n^3)$.
\end{conj}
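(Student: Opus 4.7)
My plan is to split the conjecture into its two assertions and attack them separately. The second assertion---that the minimum value grows as $\Theta(n^3)$---appears attainable by explicit computation using the spectrum of the path. The first assertion---that the minimum is attained exactly on $P_n$ with the endpoint-closing non-edge---is the substantial obstacle and is the reason this statement remains a conjecture.

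For the asymptotic order, the normalized Laplacian $\L(P_n)$ has the explicit spectrum $\lambda_k = 1 - \cos(k\pi/(n-1))$ for $k = 0, 1, \ldots, n-1$, with eigenvectors obtained by applying $D^{1/2}$ to the cosine-type eigenvectors of $D^{-1}A$. After normalization, the endpoint entries of the $k$-th eigenvector satisfy $\v_1, \v_n = \Theta(n^{-1/2})$ and $\v_n/\v_1 = (-1)^k + o(1)$. Substituting into
\[
    \dlambda{\{1,n\}} = (1-\lambda_k)(\v_1^2 + \v_n^2) - 2\v_1\v_n
\]
(using $d_1 = d_n = 1$), the odd-$k$ modes yield $\dlambda{\{1,n\}} = \Theta(n^{-1})$ while even-$k$ modes cancel to order $o(n^{-1})$. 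Since $\lambda_k = \Theta(k^2/n^2)$ for $k$ bounded away from $n-1$, the dominant part of
\[
    \dk{\{1,n\}} = -\sum_{k=1}^{n-1} \frac{1}{\lambda_k^2}\,\dlambda{\{1,n\}}
\]
is its odd-$k$ tail $\sim -n^3 \sum_{k \text{ odd}} k^{-4}$, giving the claimed $\Theta(n^3)$ growth.

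For the extremality assertion, my plan is more tentative. A first step would be to extend the computer verification in Table \ref{fig:min-dk} past $n = 7$ to strengthen empirical support. Analytically, the lower bound from Theorem \ref{kbounds} is too weak---it grows like $\Theta(n^6)$ on $P_n$ because each $1/\lambda_k^2$ is treated uniformly, losing the cancellation present in the eigenvector factors. A sharper route would be to view the derivative as a quadratic form in the endpoint entries of the eigenvectors, combined with the observation that adding $\{1,n\}$ to $P_n$ collapses the graph diameter from $n-1$ to $\lfloor n/2 \rfloor$, the largest such single-edge collapse possible. The key structural input would be an inequality linking $\dk{\{x,y\}}$ to the mean first passage sum $m_{x,y} + m_{y,x}$, which is maximized on $P_n$ among simple graphs on $n$ vertices.

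The hard part is unambiguously the extremality claim. The space of simple graphs on $n$ vertices is enormous, and there is no obvious monotone graph operation that forces $\dk{\{x,y\}}$ to only increase, which would enable a reduction to a tractable subfamily. Without such a reduction, one must establish a global structural inequality tying $\dk{\{x,y\}}$ to combinatorial features of the graph--edge pair that is tight precisely on $P_n$ with $\{1,n\}$; identifying this inequality is the central obstacle, mirroring the difficulty of the companion extremal Conjecture \ref{conj:max_K_dir}.
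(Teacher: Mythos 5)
This statement is a conjecture; the paper offers no proof of it, only empirical evidence (an exhaustive check for $n\leq 7$, the tabulated path values up to $n=12$, and a comparison against the $\mathcal{O}(n^6)$ lower bound of Theorem \ref{kbounds}). So there is no proof to match your proposal against, and your proposal, correctly, does not claim to close the conjecture. What you add beyond the paper is a concrete route to the second assertion restricted to the path itself: using the explicit spectrum $\lambda_k = 1-\cos(k\pi/(n-1))$ and the reflection symmetry of $P_n$, the eigenvectors split into symmetric ($\v_n=\v_1$) and antisymmetric ($\v_n=-\v_1$) classes, giving $\dlambda{\{1,n\}}=-2\lambda\v_1^2$ and $2(2-\lambda)\v_1^2$ respectively with $\v_1^2=\Theta(n^{-1})$; the antisymmetric low modes then contribute $-\Theta(n^3/k^4)$ each to $\dk{\{1,n\}}$ while the symmetric modes contribute only $\Theta(n)$ in total. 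This is a completable calculation consistent with Table \ref{fig:min-dk} (the values track roughly $-n^3/8$), and it would upgrade the growth-rate half of the conjecture, \emph{for the path}, from empirical to proved. The extremality half is the genuine open problem, exactly as you say, and no amount of computation on $P_n$ touches it.

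One concrete error in your tentative plan for extremality: the commute time $m_{x,y}+m_{y,x}=2|E|\,R_{\mathrm{eff}}(x,y)$ between the endpoints of $P_n$ is $2(n-1)^2$, which is \emph{not} the maximum over simple graphs on $n$ vertices and vertex pairs. Lollipop-type graphs $L_{k,n-k}$ achieve commute time $\Theta(k^2(n-k))=\Theta(n^3)$ between the far path end and a clique vertex (this is the same mechanism behind the $\mathcal{O}(n^3)$ maximum hitting time cited in the paper). So an inequality tying $\dk{\{x,y\}}$ monotonically to commute time and "tight precisely on $P_n$" cannot exist in the form you describe; indeed the paper's own data (Table \ref{fig:eight-ninths}) already shows that the pair minimizing $\dk{}$ need not be the pair with the largest effect on $\K$ or the largest passage times. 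Any structural inequality certifying the path as extremal will have to account for the cancellation between the symmetric and antisymmetric eigenvector contributions, not just a single combinatorial quantity.
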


\section{Future Directions}
In this paper, we provided bounds for the edge derivative of eigenvalues of the normalized Laplacian (Theorem \ref{lambda-bounds}) and for the edge derivative of Kemeny's constant (Theorem \ref{kbounds}). However, as the empirical data and our Conjectures \ref{conj:max_dir_dir}, \ref{conj:max_K_dir}, and \ref{conj:min_K_dir} suggest, these bounds can probably be improved. In particular, the current bounds give Kemeny's constant bounded above by $\mathcal{O}(n^3)$ and below by $\mathcal{O}(n^6)$, but empirical data suggest that more appropriate bounds are above by $\mathcal{O}(n)$ and below by $\mathcal{O}(n^3)$.

Additionally, our focus was on unweighted, undirected graphs. Future work includes extending results to weighted derivatives. Instead of parameterizing the graph by $t$, each parameterized edge $\{x,y\}$ would have value $w_{x,y}(1+t)$. The equation for the edge derivative of an eigenvalue of the normalized Laplacian would then become 
\begin{align*}
    \dlambda{E_C} &= \frac1k \sum_{i=1}^k\left[
    (1-\lambda)\sum_{\{x,y\}\in E_C} w_{x,y}\left(\frac{\v_{i,x}^2}{d_x}+ \frac{\v_{i,y}^2}{d_y} \right)-2\sum_{\{x, y\}\in E_C} \frac{w_{x,y} \v_{i,x}\v_{i,y}}{\sqrt{d_xd_y}}
    \right].
\end{align*}
If all the parameterizing weights are the same, then this has no effect. 
It is straightforward to compute that Theorems $\ref{lambda-zero}$ and $\ref{lambda-two}$ stay consistent in this extension.  

Extending results to directed graphs is more challenging. 
This is because the definition of the derivative of a matrix-eigenvector equation is only defined for Hermitian matrices.
In the case of real-valued graph matrices, this constrains us to symmetric matrices. 
The normalized Laplacian has many spectral similarities to the probability transition matrix $D^{-1}A$, which is not symmetric.
If $\L\v=\lambda \v$ and $\u =D^{-1/2}\v$, then $D^{-1}A\u=(1-\lambda)\u$. 
For a connected graph, the edge derivative of the eigenvalues, $\mu$, of the probability transition matrix would be as follows,  
\begin{align*}
    \frac{d \mu}{E_C} &= \frac1k \sum_{i=1}^k\left[
    \mu\sum_{\{x,y\}\in E_C} (u_{i,x}^2 + u_{i,y}^2)-2\sum_{\{x,y\}\in E_C} u_{i,x}u_{i,y}
    \right].
\end{align*} This could be modified to include accommodate directed graphs as follows, 
\begin{align*}
    \frac{d \mu}{E_C} &= \frac1k \sum_{i=1}^k\left[
    \mu\sum_{(x,y)\in E_C} u_{i,x}^2 -\sum_{(x,y)\in E_C} u_{i,x}u_{i,y}
    \right].
\end{align*} These results would further expand our knowledge of how small changes to directed and weighted graphs affect Kemeny's constant. 

\section*{Acknowledgement}
This research was conducted primarily at the 2022 Iowa State University Math REU which
was supported through NSF Grant DMS-1950583.

\end{document}